\newtheorem{theorem}{Theorem}[section]
\newtheorem{corollary}[theorem]{Corollary}
\newtheorem{lemma}[theorem]{Lemma}
\newtheorem{proposition}[theorem]{Proposition}
\theoremstyle{definition}
\newtheorem{definition}[theorem]{Definition}
\newtheorem{example}[theorem]{Example}
\newtheorem{remark}[theorem]{Remark}
\numberwithin{equation}{section}
\title[Existence results for strong vector equilibrium problems]
{Some enhanced existence results for strong vector equilibrium problems}
\author[A. Uderzo]{Amos Uderzo}
\address[A. Uderzo]{Dept. of Mathematics and Applications, University
of Milano - Bicocca, Milano, Italy}
\email{{\tt amos.uderzo@unimib.it}}
\keywords{Vector equilibrium problem, strong solution, strong slope, $C$-concavity,
metric $C$-increase, subdifferential calculus}
\subjclass[2020]{49J53, 49J52, 90C33}
\date{\today}
\newcommand{\R}{\mathbb R}
\newcommand{\N}{\mathbb N}
\newcommand{\X}{\mathbb X}
\newcommand{\Y}{\mathbb Y}
\newcommand{\Uball}{{\mathbb B}}
\newcommand{\Usfer}{{\mathbb S}}
\newcommand{\dom}{{\rm dom}\, }
\newcommand{\nullv}{\mathbf{0}}
\newcommand{\wclco}{\overline{\rm conv}{\,}^*\, }
\newcommand{\cone}{{\rm cone}\, }
\newcommand{\inte}{{\rm int}\, }
\newcommand{\weakstar}{weak${}^*\, $ }
\newcommand{\Lin}{\mathcal{L}}
\newcommand{\Ph}{\mathcal{P}}
\newcommand{\SVE}{({\tt SVE})}
\newcommand{\VE}{({\tt VE})}
\newcommand{\WVE}{({\tt WVE})}
\newcommand{\Fmap}{F_{f,K}}
\newcommand{\Solv}{{\mathcal S}{\mathcal E}}
\newcommand{\parord}{\le_{{}_C}}
\newcommand{\ssinf}{|\nabla_K f|^{>}}
\newcommand{\Fsub}{\widehat{\partial}}
\newcommand{\Msub}{\partial_{\rm M}}
\newcommand{\dC}{d_C}
\newcommand{\nuka}{\nu_{+K}}
\newcommand{\Uplim}{{\rm Limsup}}
\newcommand{\stsl}[1]{|\nabla #1|}
\newcommand{\stslK}[1]{|\nabla_K #1|}
\newcommand{\dcone}[1]{{#1}^{{}^\ominus}}
\newcommand{\ndc}[1]{{#1}^{{}^\ominus}}
\newcommand{\ball}[2]{{\rm B}(#1, #2)}
\newcommand{\dist}[2]{{\rm dist}\left(#1,#2\right)}
\newcommand{\exc}[2]{{\rm exc}(#1,#2)}
\newcommand{\Ncone}[2]{{\rm N}(#1;#2)}
\newcommand{\FNcone}[2]{\widehat{\rm N}(#1;#2)}
\newcommand{\incr}[2]{{\rm inc}_C(#1;#2)}   
\newcommand{\Bderm}[2]{{\rm D}_{B}#1(#2)}
\newcommand{\Fderm}[2]{{\rm D}#1(#2)}
\newcommand{\Proj}[2]{\Pi(#1;#2)}
\newcommand{\Fder}[3]{{\rm D}#1(#2)#3}
\newcommand{\Bder}[3]{{\rm D}_{B}#1(#2;#3)}
\begin{document}

\begin{abstract}
This paper explores some sufficient conditions for the enhanced
solvability of strong vector equilibrium problems, which can be established
via a variational approach. Enhanced solvability here means existence
of solutions, which are strong with respect to the partial ordering,
complemented with inequalities estimating the distance from the solution
set (namely, error bounds). This kind of estimates plays a crucial role
in the tangential (first-order) approximation of the solution set
as well as in formulating optimality conditions for mathematical programming
with equilibrium constraints (MPEC).

The approach here followed characterizes solutions as zeros (or global
minimizers) of some merit functions associated to the original problem.
Thus, to achieve the main results the traditional employment of the KKM
theory is replaced by proper conditions on the slope of the merit functions.
In turn, to make such conditions verifiable, some tools of nonsmooth analysis
are exploited. As a result, several conditions for the enhanced solvability
of strong equilibrium problems are derived, which are expressed in terms
of generalized (Bouligand) derivatives, convex normals and various
(Fenchel and Mordukhovich) subdifferentials.
\end{abstract}

\maketitle


\section{Introduction}

Since more than two decades vector equilibrium problems are a topic
of active investigations, typically conducted by methods of nonlinear
and convex analysis.
The investigations exposed in the present paper consider vector
equilibrium problems, which are defined by a vector valued bifunction
taking values in a partially ordered space $\Y$, namely
$f:\X\times\X\longrightarrow\Y$, and a (nonempty) closed constraint set
$K\subseteq\X$. Throughout the paper, the relation $\parord$ partially ordering
$\Y$ is supposed to be induced in the standard way by a fixed closed
nontrivial convex cone $C\subseteq\Y$ (i.e. $\{\nullv\}\ne C\ne\Y$).

Similarly as in vector optimization, also for vector equilibrium problems
the solution concept is not unambiguously defined a priori. Given the
aforementioned data, by strong vector equilibrium the following problem
is meant:
$$
  \hbox{ find $\bar x\in K$ such that } f(\bar x,x)\in C,
  \quad\forall x\in K.   \leqno \SVE
$$
The solution set associated with problem $\SVE$ will be denoted
throughout the paper by $\Solv$. Sufficient conditions for its
nonemptiness and estimates for the distance from it are the main
theme of the present paper.

With the above data, one may also consider the different problem
$$
  \hbox{ find $\bar x\in K$ such that } f(\bar x,x)\not\in
  -C\backslash\{\nullv\},
  \quad\forall x\in K,   \leqno \VE
$$
called vector equilibrium problem. Furthermore,
if $\inte C\ne\varnothing$, it makes sense to consider the
so-called weak vector equilibrium problem,
meaning
$$
  \hbox{ find $\bar x\in K$ such that } f(\bar x,x)\not\in -\inte C,
  \quad\forall x\in K.   \leqno \WVE
$$
It is clear from the respective definitions that every solution to a
problem $\SVE$ is a fortiori a solution to the problems $\VE$ and $\WVE$,
defined by the same data (whence the terminology). Of course, elementary examples
show that the converse is not true. If, in particular, $\Y=\R$ and
$C=[0,+\infty)$, then $\SVE$, $\VE$ and $\WVE$ collapse to the same problem,
namely what is called equilibrium problem after Blum and Oettli.
By their seminal paper \cite{BluOet94}, they definitely contribute
to popularize this kind of problem, in stressing its unifying feature and
undertaking a thorough study of it.
In fact, equilibrium problem revealed to be a convenient format
to treat in a unified framework various problems which are relevant
in operations research and mathematical programming, such as single and
multicriteria optimization problems, saddle point problems, complementarity
problems, variational inequalities, fixed point problems, Nash equilibrium
problems. In a similar manner, vector equilibrium problems
provide a format able to subsume vector optimization problems,
vector complementarity problems and vector variational inequalities
(see \cite{AnOeSc97,Ansa00,Gong06} and references therein).
For this reason, in the last two decades vector equilibrium problems
became the subject of many investigations. As it is reasonable,
within the fast growing literature in this area, a remarkable
amount of research work focussed on solution existence and related issues
(see, for instance, \cite{AnKoYa01,AnOeSc96,AnOeSc97,Ansa00,BiHaSc97,Gong06}).
One of the main techniques of analysis in this context consists in
adapting to the vector case the approach due to Ky Fan, originally
proposed for scalar equilibrium problems (see, for instance,
\cite{BiCaPaPa13}). To achieve the nonemptiness
of the solution set, regarded as the intersection of a proper family
of sets, this approach leads to apply the Knaster-Kuratowski-Mazurkiewicz
theorem (a.k.a. Three Polish theorem) or some variant of it
(see \cite{BiHaSc97,CeMaYa08}). Other approaches to solution existence
specific for strong vector equilibrium problems rely on different techniques,
such as the employment of separation theorems for convex sets (see \cite{Gong06})
or the Kakutani fixed point theorem (see \cite{AnOeSc96}).

The aim of the present paper is to enhance the study of solvability
for strong vector equilibrium problems by complementing results about solution
existence with inequalities estimating the distance from the solution set.
These error bounds for $\SVE$ provide useful quantitative information
on the set of solutions, which may be exploited in various contexts of
application. For example, it is well known that error bounds enable to describe
the local geometry of the solution set of a problem through its
tangential (first-order) approximation.
Moreover, error bounds are known to be connected with the metric subregularity
and calmness properties of set-valued mappings (see, for instance, \cite{FaHeKrOu10}).
Thus, according to a recognized approach of analysis, error bounds reveal
to be an essential tool for establishing optimality conditions via
penalization techniques. More precisely, the estimates presented
in this paper should be propaedeutic in order for deriving optimality conditions
for MPEC, where equilibrium constraints take the form of strong vector
equilibrium problems (see \cite[Section 5.2.3]{Mord06b}).
Furthermore, error bounds turns out to play an important role in
the convergence theory of numerical methods.

It is plain to see that a problem $\SVE$ can be reformulated as a
set-valued inclusion. If a set-valued mapping $\Fmap:\X\rightrightarrows\Y$
is defined as
$$
 \Fmap(x)=f(x,K)=\{y\in\Y:\ y=f(x,z),\ z\in K\},
$$
then problem $\SVE$ becomes
$$
  \hbox{ find $\bar x\in K$ such that } \Fmap(\bar x)\subseteq C.
$$
Various elements for a solution analysis of the latter problem
have been recently proposed in a series of papers \cite{Uder19,Uder20,Uder21,Uder22},
where several theoretical aspects of the solution behaviour (including
existence and stability issues) have been investigated.
The work presented in this paper can be viewed as an
attempt to specialize that line of research to the context of
vector equilibria.

Following a variational approach, the first step consists
in introducing some functional characterizations of $\Solv$. This is done by
associating to a problem $\SVE$ a sort of merit function $\nu:\X\longrightarrow
[0,+\infty]$, which is defined as
\begin{eqnarray}    \label{eq:defmeritf}
  \nu(x) &=&\exc{\Fmap(x)}{C}=\sup_{y\in\Fmap(x)}\dist{y}{C} \\
  &=&\sup_{z\in K}\dist{f(x,z)}{C}.  \nonumber
\end{eqnarray}
In order to embed also the constraining set $K$, it is useful
to consider as well the function $\nuka:\X\longrightarrow [0,+\infty]$,
given by
\begin{equation}    \label{eq:defnuka}
  \nuka(x)=\nu(x)+\dist{x}{K}.
\end{equation}
Such merit functions, incorporating all problem data, enable
to reduce strong vector equilibria to zeros (or global
minimizers) of a functional.

\begin{remark}     \label{rem:funchar}
Since $C$ is closed, one sees that $\bar x\in\Solv$ iff $\bar x\in K$ and
$\nu(\bar x)=0$. Equivalently, it holds
$$
  \Solv=\nu^{-1}((-\infty,0])\cap K=\nu^{-1}(0)\cap K.
$$
Analogously, since $K$ is closed, one sees that $\bar x\in\Solv$ iff
$\nuka(\bar x)=0$, namely
$$
  \Solv=\nuka^{-1}((-\infty,0])=\nuka^{-1}(0).
$$
The above functional characterizations of $\Solv$ enable also to clarify
at once some of its structural vector-topological properties. Namely,
whenever $\nu$ and $K$ are convex, $\Solv$ is convex (possibly empty).
Whenever $\nu$ is l.s.c. on $\X$ ($K$ being closed), $\Solv$
is closed (possibly empty).
A sufficient condition for the latter property of $\nu$
to hold is, for instance, that each function $x\mapsto f(x,z)$
is continuous on $\X$, for every $z\in K$. Indeed, in
such an event, as the distance function $y\mapsto \dist{y}{C}$ is Lipschitz
continuous on $\Y$ and therefore each function $x\mapsto \dist{f(x,z)}{C}$
is continuous on $\X$, then, according to $(\ref{eq:defmeritf})$, $\nu$ can be
expressed as an upper envelope of continuous functions on $\X$.
\end{remark}

The contents of the paper are organized as follows. Section \ref{Sect:2}
contains some preliminary technicalities dealing with the basic tools
of analysis. Other advanced tools are recalled in the subsequent section,
contextually to their use.
Section \ref{Sect:3} contains the main results of the paper arranged
in two subsections:
in the first one, conditions for the enhanced existence are presented,
which rely on metric increase behaviour of the involved bifunctions,
whereas in the second one some conditions are expressed in terms of
several subdifferentials.
Section \ref{Sect:4} is reserved for concluding remarks.

Below, let us introduce the basic notations employed in the paper.
The acronyms l.s.c., u.s.c. and p.h. stand for lower semicontinuous, upper
semicontinuous and positively homogeneous, respectively.
In a metric space setting, the closed  ball centered at an element $x$,
with radius $r\ge 0$, is denoted by $\ball{x}{r}$. In particular,
in a Banach space, $\Uball=\ball{\nullv}{1}$, whereas $\Usfer$ stands
for the unit sphere.
The distance of a point $x$ from $S$ is denoted by $\dist{x}{S}$, with
the convention that $\dist{x}{\varnothing}=+\infty$. The function
$x\mapsto\dist{x}{S}$ is sometimes indicated by $d_S$, if convenient.
$(\X,\|\cdot\|)$ and $(\Y,\|\cdot\|)$ denote real Banach spaces, whose
null vector is indicated by $\nullv$.
Given a subset $S$ of a Banach space, $\inte S$ denotes its interior, whereas
$\cone S$ its conical hull.
By $\Ph(\X,\Y)$ the Banach space of all continuous p.h. operators
acting between $\X$ and $\Y$ is denoted, equipped with the operator norm
$\|h\|_\Ph=\sup_{u\in\Usfer}\|h(u)\|$, $h\in\Ph(\X,\Y)$.
$\Lin(\X,\Y)$ denotes its subspace of all bounded linear
operators and, if $\Lambda\in\Lin(\X,\Y)$, $\Lambda^*\in\Lin(\Y^*,\X^*)$
indicates the adjoint operator to $\Lambda$.
In particular, $\X^*=\Lin(\X,\R)$ stands for the dual
space of $\X^*$, in which case $\|\cdot\|_\Ph$ is simply marked by $\|\cdot\|$.
The null vector, the unit ball and the unit sphere in a dual space
will be marked by $\nullv^*$, $\Uball^*$, and $\Usfer^*$, respectively.
The duality pairing of a Banach space with its dual will be denoted
by $\langle\cdot,\cdot\rangle$. If $S$ is a subset of a dual space,
$\wclco S$ stands for its convex closure with respect to the
\weakstar topology. Whenever $C\subseteq\Y$ is a cone, by $\dcone{C}
=\{y^*\in\Y^*:\ \langle y^*,y\rangle\le 0,\quad\forall y\in C\}$
its negative dual cone is denoted.
Given a function $\varphi:\X\longrightarrow\R\cup\{\pm\infty\}$,
by $[\varphi\le 0]=\varphi^{-1}([-\infty,0])$ its $0$-sublevel set is
denoted, whereas $[\varphi>0]=\varphi^{-1}((0,+\infty])$ denotes
the strict $0$-superlevel set of $\varphi$.
The symbol $\dom\varphi=\varphi^{-1}(\R)$ indicates the domain of
the function $\varphi$, while $\partial\varphi(x)$ the subdifferential
of $\varphi$ at $x$ in the sense of convex analysis (a.k.a. Fenchel
subdifferential), with the convention $\partial\varphi(x)=\varnothing$
if $x\not\in\dom\varphi$. The normal cone to a set $S$ at $x$ in the
sense of convex analysis is denoted by $\Ncone{x}{S}$.

\vskip1cm



\section{Preliminary tools of analysis}    \label{Sect:2}

A first group of technical preliminaries relate to semicontinuity
and convexity properties of the merit functions $\nu$ and $\nuka$.
Recall that, according to \cite{Luc89}, given a closed, convex cone $C$,
a mapping $g:\X\longrightarrow\Y$ between Banach spaces
is said to be $C$-l.s.c. (resp. $C$-u.s.c.) at $x_0\in\X$ if for any
neighbourhood $V$ of $g(x_0)$ there exists a neighbourhood $U$ of
$x_0$ in $\X$ such that
$$
  g(x)\in V+C, \qquad \hbox{(resp. $g(x)\in V-C$)}
  \qquad \forall x\in U.
$$
Clearly, continuous mappings are both $C$-l.s.c. and $C$-u.s.c.,
whereas $C$-semicontinuity does not imply continuity, in general.

\begin{lemma}    \label{lem:Cusclsc}
If $g:\X\longrightarrow\Y$ is $C$-u.s.c. at $x_0\in\X$, then
function $\dC\circ g$ is l.s.c. at $x_0$.
\end{lemma}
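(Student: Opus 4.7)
The plan is to verify lower semicontinuity of $\dC\circ g$ at $x_0$ directly from the definition, namely by showing that for every $\epsilon>0$ there is a neighbourhood $U$ of $x_0$ with $\dC(g(x))\ge \dC(g(x_0))-\epsilon$ for all $x\in U$. The natural route is to apply $C$-upper semicontinuity at $x_0$ to the neighbourhood $V=\ball{g(x_0)}{\epsilon}$ of $g(x_0)$, obtaining a neighbourhood $U$ of $x_0$ with the property that every $x\in U$ admits a decomposition $g(x)=v_x-c_x$ with $v_x\in\ball{g(x_0)}{\epsilon}$ and $c_x\in C$.

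The key observation I will exploit is that $C$, being a convex cone, is closed under addition: for any $c'\in C$, one has $c_x+c'\in C$, so the triangle inequality yields
\begin{equation*}
\|g(x)-c'\|=\|v_x-g(x_0)+g(x_0)-(c_x+c')\|\ge \|g(x_0)-(c_x+c')\|-\epsilon\ge \dC(g(x_0))-\epsilon.
\end{equation*}
Taking the infimum of the left-hand side over $c'\in C$ gives $\dC(g(x))\ge \dC(g(x_0))-\epsilon$ for every $x\in U$, which delivers $\liminf_{x\to x_0}\dC(g(x))\ge \dC(g(x_0))-\epsilon$. Since $\epsilon>0$ is arbitrary, the l.s.c. inequality follows.

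I do not anticipate genuine obstacles: the proof is essentially a one-line manipulation once the decomposition coming from $C$-u.s.c. is combined with the stability of $C$ under translation by elements of itself. The only subtlety to flag, should one wish to highlight it, is the use of convexity (or additivity) of $C$ at the step $c_x+c'\in C$; closedness of $C$, on the other hand, is not needed for this lemma, since we only compare against $\dC(g(x_0))$, which is defined by an infimum. If, for clarity, it is convenient to avoid the explicit $\liminf$, I will simply state the estimate for every $x\in U$ and invoke the definition of l.s.c. at $x_0$ in terms of $\epsilon$-neighbourhoods.
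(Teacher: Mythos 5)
Your proof is correct and takes essentially the same route as the paper's: both extract from the $C$-u.s.c.\ hypothesis the decomposition $g(x)=v-c$ with $v\in\ball{g(x_0)}{\epsilon}$ and $c\in C$, and then exploit $c+C\subseteq C$ together with the triangle inequality to obtain $\dC(g(x))\ge \dC(g(x_0))-\epsilon$. The only cosmetic difference is that the paper phrases the argument sequentially while you work directly with neighbourhoods; your remark that closedness of $C$ is not needed here is likewise consistent with the paper's argument, which never invokes it in this lemma.
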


\begin{proof}
In the current Banach space setting the property of semicontinuity
can be proven to hold by sequential arguments.
So, fix arbitrarily $\epsilon>0$ and a sequence $(x_n)_n$, with $x_n\to x_0$
as $n\to\infty$. By the $C$-upper semicontinuity of $g$ at $x_0$,
corresponding to $V=\ball{g(x_0)}{\epsilon}$ there exists
$n_\epsilon\in\N$ such that
$$
    g(x_n)\in\ball{g(x_0)}{\epsilon}-C,\quad
    \forall n\ge n_\epsilon.
$$
This means that there are $v_n\in\epsilon\Uball$ and $c_n\in C$
such that it is possible to write
\begin{equation}     \label{eq:BCrep}
   g(x_n)=g(x_0)+v_n-c_n,\quad\forall n\ge n_\epsilon.
\end{equation}
Thus, as it is $c_n+C\subseteq C$ and hence $C\subseteq C-c_n$,
on account of the representation $(\ref{eq:BCrep})$ one
obtains
\begin{eqnarray*}
  (\dC\circ g)(x_0) &=&  \inf_{c\in C}\|g(x_0)-c\|=
  \inf_{c\in C}\|g(x_n)-v_n+c_n-c\| \\
  & \le & \inf_{c\in C}\|g(x_n)-(c-c_n)\|+\|v_n\|  \\
  &=& \inf_{y\in C-c_n}\|g(x_n)-y\|+\|v_n\|
  \le  \inf_{c\in C}\|g(x_n)-c\|+\epsilon \\
  &=& (\dC\circ g)(x_n)+\epsilon,\quad\forall n\ge n_\epsilon.
\end{eqnarray*}
From these inequalities it follows
$$
  \liminf_{n\to\infty}(\dC\circ g)(x_n)\ge(\dC\circ g)(x_0)
  -\epsilon,
$$
which by arbitrariness of $\epsilon$ and $(x_n)_n$
proves the assertion in the thesis.
\end{proof}

\begin{remark}    \label{rem:nulsc}
For the purposes of the present analysis it is useful to note
that, as a straightforward consequence of Lemma \ref{lem:Cusclsc},
one can deduce that if each function $x\mapsto f(x,z)$ is
$C$-u.s.c. on $K$ for every $z\in K$, then function $\nu(x)=
\sup_{z\in K}(\dC\circ f)(x,z)=\sup_{z\in K}\dist{f(x,z)}{C}$
is l.s.c. on $K$, as an upper envelope of functions being
l.s.c. on $K$. Since $d_K$ is (Lipschitz) continuous on $\X$,
the same is true for $\nuka$.
\end{remark}

In order to formulate the next lemma it is convenient to recall
that a subset $C$ of a Banach space partially
ordered by a cone $C$ is said to be $C$-bounded if there exists
a constant $m\ge 0$ such that $S\backslash C\subseteq m\Uball$.

\begin{lemma}   \label{lem:abvbound}
Let $f:\X\times\X\longrightarrow\Y$ be a given bifunction, $K\subseteq\X$
and $x_0\in K$. If the set $f(x_0,K)$ is $C$-bounded, then $x_0\in\dom\nu$.
If, in addition, each function $x\mapsto f(x,z)$ is $C$-l.s.c. at $x_0$
uniformly in $z\in K$, then $\nu$ is bounded from above in a neighbourhood
of $x_0$ and hence $x_0\in\inte\dom\nu$.
\end{lemma}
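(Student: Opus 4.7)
The plan is to handle the two assertions separately, each being a direct unfolding of the definitions.

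For the first part, the goal is to give a finite upper bound for $\nu(x_0)=\sup_{z\in K}\dist{f(x_0,z)}{C}$ under the $C$-boundedness of $f(x_0,K)$. Let $m\ge 0$ be such that $f(x_0,K)\backslash C\subseteq m\Uball$. Fix $z\in K$ arbitrarily and look at $y=f(x_0,z)$. If $y\in C$, then $\dist{y}{C}=0$; otherwise $\|y\|\le m$, and since $\nullv\in C$ (because $C$ is a cone), one has $\dist{y}{C}\le\|y-\nullv\|\le m$. Taking the supremum over $z\in K$ gives $\nu(x_0)\le m<+\infty$, i.e.\ $x_0\in\dom\nu$.

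For the second part, the natural reading of ``$C$-l.s.c.\ at $x_0$ uniformly in $z\in K$'' is that for every $\epsilon>0$ there exists a single neighbourhood $U$ of $x_0$ such that for all $x\in U$ and all $z\in K$ one can write $f(x,z)=f(x_0,z)+v_{x,z}+c_{x,z}$ with $\|v_{x,z}\|\le\epsilon$ and $c_{x,z}\in C$. Using this representation I would imitate the chain of inequalities developed in Lemma \ref{lem:Cusclsc}: since $C$ is a convex cone, $c_{x,z}+C\subseteq C$, hence $C\subseteq C-c_{x,z}$, so
\begin{eqnarray*}
 \dist{f(x,z)}{C} &=& \inf_{c\in C}\|f(x_0,z)+v_{x,z}+c_{x,z}-c\| \\
 &\le& \|v_{x,z}\|+\inf_{c\in C}\|f(x_0,z)-(c-c_{x,z})\| \\
 &=& \|v_{x,z}\|+\inf_{c''\in C-c_{x,z}}\|f(x_0,z)-c''\| \\
 &\le& \epsilon+\dist{f(x_0,z)}{C}.
\end{eqnarray*}
Taking the supremum over $z\in K$ and combining with the first part yields $\nu(x)\le\epsilon+\nu(x_0)\le\epsilon+m$ for every $x\in U$, so $\nu$ is bounded above on $U$, which in particular forces $U\subseteq\dom\nu$ and hence $x_0\in\inte\dom\nu$.

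The calculation is routine; the only genuine point requiring care is the interpretation of the uniformity hypothesis. One must be sure that the neighbourhood $U$ provided by the $C$-lower semicontinuity is independent of $z\in K$ — without this, taking the supremum over $z$ at the end would not be legitimate, because one could only pass from a pointwise bound to $\limsup_{x\to x_0}\dist{f(x,z)}{C}\le\dist{f(x_0,z)}{C}$ at each fixed $z$, and pointwise upper semicontinuity does not, by itself, pass through an upper envelope. The uniform version is precisely what allows the above estimate to hold simultaneously in $z$, yielding the desired uniform upper bound on $\nu$ over $U$.
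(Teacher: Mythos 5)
Your proposal is correct and follows essentially the same route as the paper: the first part bounds $\dist{f(x_0,z)}{C}$ by $m$ using $\nullv\in C$ on the complement of $C$, and the second part derives the uniform estimate $\dist{f(x,z)}{C}\le\epsilon+\dist{f(x_0,z)}{C}$ for all $x\in U$ and $z\in K$ from the representation $f(x,z)=f(x_0,z)+v+c$, before passing to the supremum over $z$. Your closing remark on why the neighbourhood must be independent of $z$ matches the role the uniformity plays in the paper's own argument.
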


\begin{proof}
By hypothesis, for some $m>0$ it holds
$$
  f(x_0,K)\backslash C\subseteq m\Uball,
$$
whence it follows
$$
  \nu(x_0)=\sup_{y\in f(x_0,K)\backslash C}\dist{y}{C}\le
  \sup_{y\in m\Uball}\dist{y}{C}\le\sup_{y\in m\Uball}\|y\|=m<+\infty.
$$
Thus, $x_0\in\dom\nu$.

According to the additional hypothesis, corresponding to $V=l\Uball$
there exists a neighbourhood $U$ of $x_0$ (not depending on $z\in K$)
such that
$$
  f(x,z)\in\ball{f(x_0,z)}{l}+C,\quad\forall x\in U,\ \forall z\in K.
$$
This amounts to say that, for any $z\in K$ and $x\in U$, there exist
$v\in\ball{f(x_0,z)}{l}$ and $c\in C$ (both depending on $x\in U$
and $z\in K$), such that $f(x,z)=v+c$.
Thus, recalling that $C+C\subseteq C$, for any $z\in K$
one finds
\begin{eqnarray*}
   \dist{f(x,z)}{C}&=&\dist{v+c}{C}=\inf_{y\in C}\|v+c-y\| \\
   &\le &\inf_{c_1\in C}\inf_{c_2\in C}\|v+c-(c_1+c_2)\| \\
   &\le &\inf_{c_1\in C}\inf_{c_2\in C}[\|v-c_1\|+\|c-c_2\|]
   \le \inf_{c_1\in C}\|v-c_1\|  \\
   &=&\dist{v}{C},\quad\forall x\in U.
\end{eqnarray*}
Therefore, it follows
\begin{eqnarray*}
  \sup_{x\in U}\dist{f(x,z)}{C} &\le& \|v-f(x_0,z)\|+
  \dist{f(x_0,z)}{C}  \\
  &\le & l+\dist{f(x_0,z)}{C},\quad\forall z\in K.
\end{eqnarray*}
Consequently, one obtains
\begin{eqnarray*}
   \sup_{x\in U}\nu(x) &=& \sup_{x\in U}\sup_{z\in K}\dist{f(x,z)}{C}
   =\sup_{z\in K}\sup_{x\in U}\dist{f(x,z)}{C} \\
   &\le &\sup_{z\in K} [l+\dist{f(x_0,z)}{C}]=
   l+\nu(x_0)<+\infty,
\end{eqnarray*}
which means that $\nu$ is bounded from above on $U$.
\end{proof}

Another key assumption for the present approach is $C$-concavity
for mappings. Following \cite{FreKas99}, a mapping $g:\X\longrightarrow\Y$
between Banach spaces is said to be $C$-concave on the convex set
$K\subseteq\X$ if for every $x_1,\, x_2\in K$ and for every $t\in [0,1]$
it is true that
$$
 tg(x_1)+(1-t)g(x_2)\parord g(tx_1+(1-t)x_2).
$$

\begin{example}    \label{ex:Cconcavemap}
(i) It is readily seen that if a mapping $g:\R^n\longrightarrow\R^m$
is defined by components $g_i:\R^n\longrightarrow\R$, each of which
is concave on a convex set $K\subseteq\R^n$, then $g$ is $\R^m_+$-concave
on $K$.

(ii) A remarkable class of $C$-concave mappings is the subclass
of $\Ph(\X,\Y)$ formed by the superlinear operators taking values
in a Kantorovich space $\Y$ (i.e. a Dedekind complete normed vector
lattice), partially ordered by a cone $C$.
Following \cite{Rubi77}, a mapping $h\in\Ph(\X,\Y)$ is said to
be superlinear if $h(x)+h(z)\parord h(x+z)$, for every $x,\, z\in\X$.
It is well known that, as a consequence of the
Hahn-Banach-Kantorovich theorem (see \cite{KanAki82}), any superlinear
operator admits the following infimal representation
$$
  h(x)=\min{}_{\parord}\{\Lambda x:\ \Lambda\in\overline{\partial}h\},
$$
where $\overline{\partial}h=\{\Lambda\in\Lin(\X,\Y):\ h(x)\parord\Lambda x,
\ \forall x\in\X\}$ and $\min_{\parord}S$ denotes the smallest
element of a set $S\subseteq\Y$ with respect to the partial order $\parord$.
\end{example}

\begin{lemma}    \label{lem:Cconcconv}
If $g:\X\longrightarrow\Y$ is $C$-concave on the convex set
$K\subseteq\X$, then function $\dC\circ g$ is convex on $K$.
\end{lemma}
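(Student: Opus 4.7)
My plan is to derive the claim by combining two elementary properties of the distance function $\dC$: the monotonicity of $\dC$ along the cone ordering (namely, $y_1\parord y_2$ should imply $\dC(y_2)\le\dC(y_1)$) and the convexity of $\dC$ on $\Y$, which is standard since $C$ is convex. The $C$-concavity hypothesis on $g$ then bridges the two.

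First I would establish the monotonicity. The intuition is that translating $y_1$ by an element of $C$ can only bring it closer to $C$. Concretely, given $y_1\parord y_2$, set $c_0=y_2-y_1\in C$; since $C$ is a convex cone (hence closed under addition), $C+c_0\subseteq C$, equivalently $C\subseteq C-c_0$. A change of variable in the infimum defining $\dC(y_2)$ then writes it as $\inf_{c'\in C-c_0}\|y_1-c'\|$, which by the above inclusion is bounded above by $\inf_{c\in C}\|y_1-c\|=\dC(y_1)$.

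Next, I would fix $x_1,x_2\in K$ and $t\in[0,1]$, set $x_t=tx_1+(1-t)x_2$, and apply $C$-concavity of $g$ to get $tg(x_1)+(1-t)g(x_2)\parord g(x_t)$. The monotonicity step just established yields $(\dC\circ g)(x_t)\le\dC\bigl(tg(x_1)+(1-t)g(x_2)\bigr)$, and convexity of $\dC$ dominates the right-hand side by $t(\dC\circ g)(x_1)+(1-t)(\dC\circ g)(x_2)$. Chaining these two inequalities produces the desired convexity of $\dC\circ g$ on $K$.

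No real obstacle is expected: the argument is essentially a three-line composition once the monotonicity of $\dC$ is isolated. The only step worth a moment's attention is the latter, which reduces to the set inclusion $C\subseteq C-c_0$ for $c_0\in C$, an immediate consequence of $C+C\subseteq C$. Notice that the proof uses neither closedness of $C$ nor any pointedness of the cone, only the algebraic properties of a convex cone together with the translation-invariance of the norm.
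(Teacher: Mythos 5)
Your proposal is correct and follows essentially the same route as the paper: the paper's first chain of inequalities is exactly your monotonicity step (via $c_0+C\subseteq C$ for $c_0=g(tx_1+(1-t)x_2)-[tg(x_1)+(1-t)g(x_2)]\in C$), followed by an appeal to the sublinearity of $y\mapsto\dist{y}{C}$, which plays the role of your convexity step. The only differences are cosmetic: you isolate the monotonicity of $\dC$ as a standalone observation and invoke convexity rather than sublinearity of the distance function, and your remark that closedness of $C$ is not needed here is accurate.
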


\begin{proof}
Take arbitrary $x_1,\, x_2\in K$ and $t\in [0,1]$. Owing to the
$C$-concavity of $g$ on $K$, one has
$$
  c_0=g(tx_1+(1-t)x_2)-[tg(x_1)+(1-t)g(x_2)]\in C.
$$
Since it is $c_0+C\subseteq C$, from the above inclusion
one obtains
\begin{eqnarray*}
   (\dC\circ g)(tx_1+(1-t)x_2) &=& \inf_{c\in C}\|g(tx_1+(1-t)x_2)-c\| \\
   &\le &\inf_{c\in c_0+C}\|g(tx_1+(1-t)x_2)-c\| \\
   &\le & \inf_{c\in C}\|g(tx_1+(1-t)x_2)-\{g(tx_1+(1-t)x_2) \\
   & & -[tg(x_1)+(1-t)g(x_2)]+c\}\|  \\
   &=& \dist{tg(x_1)+(1-t)g(x_2)}{C}.
\end{eqnarray*}
By recalling that the function $y\mapsto\dist{y}{C}$ is sublinear
on $\Y$ as $C$ is a convex cone, then from the above inequalities
it follows
\begin{eqnarray*}
  (\dC\circ g)(tx_1+(1-t)x_2) &\le& t\dist{g(x_1)}{C}+(1-t)\dist{g(x_2)}{C} \\
  &=& t(\dC\circ g)(x_1)+(1-t)(\dC\circ g)(x_2),
\end{eqnarray*}
which, by arbitrariness of $x_1,\, x_2\in K$ and $t\in [0,1]$
completes the proof.
\end{proof}

The next lemma singles out a sufficient condition on the problem
data of $\SVE$ in order for $\nu$ to be a (proper) convex and continuous
function.

\begin{lemma}    \label{lem:convcontmerfun}
Let $f:\X\times\X\longrightarrow\Y$ be a given bifunction and
let $K\subseteq\X$ be a convex set. Suppose that:

\begin{itemize}

\item[(i)] each function $x\mapsto f(x,z)$ is $C$-concave on $K$,
for every $z\in K$;

\item[(ii)] there exists $x_0\in K$ such that $f(x_0,K)$
is $C$-bounded and function $x\mapsto f(x,z)$ is $C$-l.s.c.
at $x_0$, uniformly in $z\in K$.

\end{itemize}
Then, function $\nu:K\longrightarrow [0,+\infty]$ is convex
and continuous on $\inte\dom\nu\ne\varnothing$.
\end{lemma}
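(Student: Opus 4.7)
The plan is to combine Lemmas \ref{lem:Cconcconv} and \ref{lem:abvbound} with a classical continuity result from convex analysis. Hypothesis (i) will feed the convexity part of the conclusion, while hypothesis (ii) will provide nonemptiness of $\inte \dom \nu$ together with local boundedness from above of $\nu$, from which continuity will follow for free.

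For the convexity part, I would fix an arbitrary $z \in K$ and apply Lemma \ref{lem:Cconcconv} to the mapping $g = f(\cdot, z)$: by hypothesis (i), $g$ is $C$-concave on the convex set $K$, hence the function $x \mapsto \dist{f(x,z)}{C} = (\dC \circ g)(x)$ is convex on $K$. Since the pointwise supremum of an arbitrary family of convex functions is convex on the common convex domain, the representation
\[
  \nu(x) = \sup_{z\in K}\dist{f(x,z)}{C}
\]
from $(\ref{eq:defmeritf})$ immediately yields the convexity of $\nu$ on $K$.

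For the continuity part, I would invoke Lemma \ref{lem:abvbound} under hypothesis (ii): it delivers $x_0 \in \inte \dom \nu$, so in particular $\inte \dom \nu \ne \varnothing$, and its proof actually exhibits a neighbourhood $U$ of $x_0$ on which $\nu$ is bounded from above. At this stage the classical fact that a proper convex function which is bounded from above on a neighbourhood of some point in its effective domain is locally Lipschitz, and therefore continuous, on the entire interior of its domain, closes the argument at one stroke.

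I do not foresee any serious obstacle: the proof is essentially a bookkeeping exercise combining Lemmas \ref{lem:Cconcconv} and \ref{lem:abvbound} with a well-known convex-analytic continuity theorem. The only subtlety worth pointing out in the write-up is the dual role played by the convexity of $K$, which is needed both to activate the $C$-concavity hypothesis (i) through Lemma \ref{lem:Cconcconv} and to propagate the convexity of the individual functions $x \mapsto \dist{f(x,z)}{C}$ to their supremum $\nu$ on the whole of $K$.
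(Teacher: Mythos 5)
Your proposal is correct and follows essentially the same route as the paper: convexity of $\nu$ via Lemma \ref{lem:Cconcconv} and the supremum of convex functions, nonemptiness of $\inte\dom\nu$ and local boundedness from above via Lemma \ref{lem:abvbound}, and then the standard convex-analytic continuity theorem (the paper cites \cite[Theorem 2.2.9]{Zali02}). No gaps.
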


\begin{proof}
According to Lemma \ref{lem:abvbound}, by virtue of hypothesis
(ii) $x_0\in\inte\dom\nu$ and $\nu$ turns out to be bounded
from above in a neighbourhood of $x_0$.
According to Lemma \ref{lem:Cconcconv}, each function $x\mapsto\dist{f(x,z)}{C}$
is convex on $K$ and therefore so is function $\nu$, which
can be regarded as an upper envelope of functions $\dist{f(\cdot,z)}{C}$
over $z\in K$.
As a convex function, which is bounded from above in a neighbourhood
of $x_0$, $\nu$ must be continuous on the interior of its domain,
in the light of a well-known result in convex analysis (see, for
instance, \cite[Theorem 2.2.9]{Zali02}).
\end{proof}

\begin{remark}    \label{rem:conconvnuka}
In view of the subsequent analysis, it is convenient to
notice that, under the hypotheses of Lemma \ref{lem:convcontmerfun},
also function $\nuka$ turns out to be convex and continuous,
with $\dom\nuka=\dom\nu\ne\varnothing$.
\end{remark}

A characteristic feature of the main results established in the next section
is to provide, along with solution existence,
quantitative (metric) information about the
solution set to strong vector equilibrium problems.
Following a standard technique of variational analysis,
estimates for the distance from the solution set to $\SVE$
will be investigated by means of a metric slope of the merit
functions.
Given a function $\varphi:\X\longrightarrow\R\cup\{\pm\infty\}$
defined on a Banach space, a closed set $K\subseteq\X$ and
$x\in K\cap\dom\varphi$, the nonnegative value
$$
  \stslK{\varphi}(x)=\left\{\begin{array}{ll}
  0, & \hbox{ if $x$ is a local minimizer} \\
     & \hbox{ of $\varphi$ subject to $x\in K$},  \\
  \displaystyle\limsup_{u\stackrel{K}{\to} x}
  {\varphi(x)-\varphi(u)\over\|x-u\|}, & \hbox{ otherwise, }
  \end{array}\right.
$$
where $u\stackrel{K}{\to} x$ means $u\to x$ while $u\in K$,
represents the slope of $\varphi$ at $x$ restricted to $K$.
It may be regarded as a restricted version of the well-known
notion of (strong) slope of a function $\varphi:X\longrightarrow
\R\cup\{\pm\infty\}$ defined on a metric space $(X,d)$,
denoted here by $\stsl{\varphi}(x)$, which was introduced in
\cite{DeMaTo80} and subsequently employed in various contexts of variational
analysis (see, among the others, \cite{Aze03,AzeCor04,Ioff00,Peno13}).

\begin{remark}     \label{rem:stslstslK}
(i) Directly from the above definition, one sees that in general
it holds
$$
  \stslK{\varphi}(x)\le\stsl{\varphi}(x),
$$
while, whenever it is $x\in\inte K$, one has $\stslK{\varphi}(x)=
\stsl{\varphi}(x)$.

(ii) Remember that, whenever $\varphi$ is Fr\'echet differentiable
at $x\in\dom\varphi$, it holds $\stsl{\varphi}(x)=\|\Fderm{\varphi}{x}\|$.

(iii) Whenever $\varphi$ is a proper, convex and l.s.c. function,
one has $\stsl{\varphi}(x)=\dist{\nullv^*}{\partial\varphi(x)}$
(see, for instance, \cite[Proposition 3.1]{AzeCor04} or
\cite[Theorem 5(ii)]{FaHeKrOu10}).
\end{remark}

The seminal condition for the solution existence of $\SVE$
presented in Section \ref{Sect:3} will be
formulated in terms of the following crucial value associated
through $\nu$ with a problem $\SVE$:
$$
  \ssinf=\inf_{x\in [\nu>0]\cap K}\stslK{\nu}(x).
$$

It is well known from variational analysis (see, for instance,
\cite{Aze03,AzeCor04,Ioff00,Peno13,Uder19})
that distances from sublevel sets of a l.s.c. function can be
estimated in terms of its slope, while a certain positivity behaviour
of the slope allows one to establish solvability of inequalities.
All of this leads to investigate metric behaviours of the bifunction
$f$, which ensure these conditions to hold. The property
defined below pursues this purpose. It is a uniform variant
of a conceptual tool that was already considered in \cite{Uder19},
in connection with the analysis of solvability and stability properties
of set-valued inclusions (see also \cite{Uder21}).

\begin{definition}   \label{def:metincrunif}
Let $f:\X\times\X\longrightarrow\Y$ be a bifunction and let
$K\subseteq\X$  and $S\subseteq\X$ be given sets. Then, $f$ is said to be
{\it metrically $C$-increasing} on the set $S$, {\it uniformly in} $z\in K$,
if there exists $\alpha>1$ such that
for every $x_0\in S$ there is $\delta_0>0$ (not depending on $z$)
such that

\begin{eqnarray}   \label{in:Cincr}
 & \forall r\in (0,\delta_0)\ \exists x\in\ball{x_0}{r}\cap S:  \nonumber \\
 & \ball{f(x,z)}{\alpha r}\subseteq\ball{f(x_0,z)+C}{r},
 \quad\forall z\in K.
\end{eqnarray}
The value
$$
  \incr{f}{S}=\sup\{\alpha>1:\ \hbox{ condition $(\ref{in:Cincr})$
  holds}\}
$$
is called {\it exact bound of uniform $C$-metric increase} of $f$ over $S$.
\end{definition}

In view of the next proposition, it is useful to recall some
relations involving the behaviour of the excess of sets that
will be exploited in its proof.
Let $C\subseteq\Y$ be a closed, convex cone and let $S\subseteq\Y$
a set with $S\not\subseteq C$ (hence, nonempty). Then, it holds
\begin{itemize}

\item[$(p_1)$] $\exc{\ball{S}{r}}{C}=\exc{S}{C}+r,\quad\forall r>0$
(see \cite[Lemma 2.2]{Uder19});

\item[$(p_2)$] $\exc{S+C}{C}=\exc{S}{C}$ (see \cite[Remark 2.1]{Uder19}).

\end{itemize}

The next proposition shows that the metric $C$-increase property is able
to capture a behavior of $f$, which
is useful in providing estimates from below of $\ssinf$ that are convenient
to the present approach.

\begin{proposition}   \label{lem:Cincrstsl}
Let $f:\X\times\X\longrightarrow\Y$ be a given bifunction and
let $K\subseteq\X$. Suppose that:
\begin{itemize}

\item[(i)] function $\nu$, associated with $f$ and $K$ as in
$(\ref{eq:defmeritf})$, is l.s.c. on $K$;

\item[(ii)] $f(\cdot,z)$ is metrically $C$-increasing on $K\cap[\nu>0]$,
uniformly in $z\in K$.

\end{itemize}
Then, it holds
\begin{equation}   \label{in:ssinfincrest}
  \ssinf\ge\incr{f}{K\cap[\nu>0]}-1.
\end{equation}
\end{proposition}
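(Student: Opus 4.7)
My approach is to show that whenever $\alpha > 1$ witnesses the uniform metric $C$-increase of $f$ on $S = K \cap [\nu > 0]$, one has $\stslK{\nu}(x_0) \geq \alpha - 1$ at every $x_0 \in S$; since $\incr{f}{S}$ is by definition the supremum of such $\alpha$ (and smaller admissible values inherit the inclusion from larger ones), passing to the supremum will yield the claim.

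Fix such an $\alpha$ and $x_0 \in S$. The case $\nu(x_0) = +\infty$ makes the inequality trivial, so assume $\nu(x_0) \in (0,+\infty)$. By the uniform metric $C$-increase hypothesis, there exists $\delta_0 > 0$ such that for every $r \in (0, \delta_0)$ one can select $x_r \in \ball{x_0}{r} \cap S$ satisfying $\ball{f(x_r,z)}{\alpha r} \subseteq \ball{f(x_0,z) + C}{r}$ simultaneously for all $z \in K$.

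The core computation is to convert this set-inclusion into the pointwise estimate
$$
\dist{f(x_r,z)}{C} \leq \max\{0,\ \dist{f(x_0,z)}{C} - (\alpha-1)r\}, \quad \forall z \in K,
$$
by applying $\exc{\cdot}{C}$ to both sides (which is monotone under set inclusion). On the right, if $f(x_0,z) \notin C$ then $f(x_0,z) + C \not\subseteq C$ and $(p_1)$, $(p_2)$ combine to give $\exc{\ball{f(x_0,z)+C}{r}}{C} = \dist{f(x_0,z)}{C} + r$; if $f(x_0,z) \in C$ then $f(x_0,z) + C \subseteq C$ and a direct estimate yields $\exc{\ball{f(x_0,z)+C}{r}}{C} \leq r = \dist{f(x_0,z)}{C} + r$. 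On the left, if $f(x_r,z) \notin C$ then $(p_1)$ gives $\exc{\ball{f(x_r,z)}{\alpha r}}{C} = \dist{f(x_r,z)}{C} + \alpha r$, and subtracting produces $\dist{f(x_r,z)}{C} \leq \dist{f(x_0,z)}{C} - (\alpha-1)r$; if $f(x_r,z) \in C$ the bound holds with $0$ on the left.

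Taking the supremum over $z \in K$ (the subtrahend $(\alpha-1)r$ being independent of $z$) yields $\nu(x_r) \leq \max\{0, \nu(x_0) - (\alpha-1)r\}$. For $r < \min\{\delta_0, \nu(x_0)/(\alpha-1)\}$ this removes the maximum and gives $\nu(x_0) - \nu(x_r) \geq (\alpha-1)r > 0$, so $x_r \neq x_0$ and $x_0$ is not a local minimizer of $\nu$ on $K$. Since $\|x_r - x_0\| \leq r$, one obtains $(\nu(x_0) - \nu(x_r))/\|x_0 - x_r\| \geq \alpha - 1$, and letting $r \to 0^+$ (so $x_r \to x_0$ within $K$) the slope definition delivers $\stslK{\nu}(x_0) \geq \alpha - 1$. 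The main technical delicacy is the case analysis in the excess computation, since $(p_1)$ and $(p_2)$ presuppose that the set involved is not contained in $C$; the degenerate subcases where $f(x_0,z) \in C$ or $f(x_r,z) \in C$ must be handled separately, via the direct bound $\exc{\ball{C}{r}}{C} \leq r$ and via the trivial identity $\dist{c}{C} = 0$ for $c \in C$ respectively.
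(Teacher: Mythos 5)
Your proof is correct, and it follows the paper's overall strategy (uniform metric $C$-increase produces, for each small $r$, a point $x_r\in\ball{x_0}{r}\cap K\cap[\nu>0]$ with $\nu(x_0)-\nu(x_r)\ge(\alpha-1)r$, whence $\stslK{\nu}(x_0)\ge\alpha-1$ and the claim follows by taking suprema). The one genuine difference is where the excess relations $(p_1)$--$(p_2)$ are applied. The paper first aggregates the inclusions over $z$ into the set-level inclusion $\ball{\Fmap(x_r)}{\alpha r}\subseteq\ball{\Fmap(x_0)+C}{r}$ and then applies $(p_1)$, $(p_2)$ once to the full images $\Fmap(x_r)=f(x_r,K)$ and $\Fmap(x_0)+C$; there the hypothesis $S\not\subseteq C$ required by $(p_1)$ is automatic because $x_r,x_0\in[\nu>0]$, so no case analysis is needed. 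You instead apply the excess pointwise in $z$, obtaining $\dist{f(x_r,z)}{C}\le\max\{0,\dist{f(x_0,z)}{C}-(\alpha-1)r\}$, and recover $\nu(x_r)\le\max\{0,\nu(x_0)-(\alpha-1)r\}$ by taking the supremum; the price is the separate treatment of the degenerate subcases $f(x_0,z)\in C$ and $f(x_r,z)\in C$, which you handle correctly, and the benefit is that you avoid the paper's closedness argument needed to pass to the union over $z$. A side effect of your route is that hypothesis (i) (lower semicontinuity of $\nu$ on $K$) is never used; this is consistent, since Definition of the uniform metric $C$-increase on $K\cap[\nu>0]$ already places $x_r$ in that set, and indeed the paper's own use of (i) is only to re-derive this membership. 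Both arguments are sound; yours is slightly more elementary but longer at the one step where they diverge.
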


\begin{proof}
Take arbitrary $\alpha\in (1,\incr{f}{K\cap[\nu>0]})$ and
$x_0\in K\cap[\nu>0]$. Since $\nu$ is l.s.c. at $x_0$,
there exists $\delta>0$ such that $\ball{x_0}{\delta}\cap K
\subseteq [\nu>0]$. By virtue of hypothesis (ii),
for any $r\in (0,\delta_0)$, where $\delta_0\in (0,\delta)$
is as in Definition \ref{def:metincrunif}, there exists
$x_r\in\ball{x_0}{r}\cap K\cap [\nu>0]$ such that
$\ball{f(x_r,z)}{\alpha r}\subseteq\ball{f(x_0,z)+C}{r}$,
for every $z\in K$. This implies
$$
  \ball{f(x_r,z)}{\alpha r}\subseteq\ball{f(x_0,K)+C}{r},
  \quad\forall z\in K.
$$
So, fix any $\tilde{\alpha}\in (1,\alpha)$. Since a fortiori it is
$$
  \ball{f(x_r,z)}{\tilde{\alpha} r}\subseteq\ball{f(x_0,K)+C}{r},
  \quad\forall z\in K,
$$
one sees from the last inclusion that
$\ball{f(x_r,K)}{\tilde{\alpha} r}\subseteq\ball{f(x_0,K)+C}{r}$,
whence it is possible to deduce that
$\ball{f(x_r,K)}{\alpha r}\subseteq\ball{f(x_0,K)+C}{r}$,
because the set $\ball{f(x_0,K)+C}{r}$ is closed and
$\tilde{\alpha}$ arbitrary in $(1,\alpha)$.
Notice that it must be $x_r\ne x_0$
because $\alpha>1$.
Thus, since $\Fmap(x_r)\not\subseteq C$, on account of the above
recalled relations $(p_1)$ and $(p_2)$, one obtains
\begin{eqnarray*}
  \nu(x_r) &=& \exc{\ball{\Fmap(x_r)}{\alpha r}}{C}-\alpha r \\
  &\le&  \exc{\ball{\Fmap(x_0)+C}{r}}{C}-\alpha r \\
  &\le &\nu(x_0)+r-\alpha r.
\end{eqnarray*}
As $x_r$ belongs to $\ball{x_0}{r}\cap K\cap [\nu>0]$, one finds
$$
  \nu(x_0)-\nu(x_r)\ge (\alpha-1)r\ge (\alpha-1)\|x_r-x_0\|.
$$
By arbitrariness of $r\in (0,\delta_0)$, it results in
$$
  \stslK{\nu}(x_0)\ge\alpha-1.
$$
Since the last inequality is true all over $K\cap[\nu>0]$ by the
arbitrariness of $x_0$, then one can conclude that $\ssinf\ge\alpha-1$.
From this inequality one achieves the estimate
in $(\ref{in:ssinfincrest})$ by taking into account the arbitrariness
of $\alpha\in (1,\incr{f}{K\cap[\nu>0]})$.
\end{proof}



\section{Enhanced solution existence for strong vector
equilibrium problems}    \label{Sect:3}


The present approach to the strong solvability of vector
equilibrium problems starts with a general result which relies
on the following specialization of \cite[Theorem 1.10]{Aze03}.

\begin{proposition}    \label{pro:Aze03}
Let $(X,d)$ be a complete metric space and let $\varphi:X\longrightarrow
\R\cup\{+\infty\}$ be a l.s.c. function. Assume that
$[\varphi<+\infty]\ne\varnothing$ and that
$$
  \tau=\inf_{x\in[0<\varphi<+\infty]}\stsl{\varphi}(x)>0.
$$
Then, it is $[\varphi\le 0]\ne\varnothing$ and
\begin{equation}    \label{in:erbovarphi}
  \dist{x}{[\varphi\le 0]}\le {\max\{\varphi(x),0\}\over\tau},
  \quad\forall x\in [\varphi<+\infty].
\end{equation}
\end{proposition}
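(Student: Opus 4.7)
\emph{Proof plan.} My plan is to derive the statement from the Ekeland variational principle, applied to the positive part of $\varphi$, coupled with a contradiction argument driven by the slope hypothesis. The outer shell is a standard variational argument; the subtle point lies in matching the Ekeland descent inequality (written for $\varphi^+:=\max\{\varphi,0\}$) with the definition of the strong slope of $\varphi$ itself.

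First, fix any $x\in[\varphi<+\infty]$. If $\varphi(x)\le 0$, then $x\in[\varphi\le 0]$, so $\dist{x}{[\varphi\le 0]}=0$ and $(\ref{in:erbovarphi})$ is trivial. Assume therefore $\varphi(x)>0$ and pick any $\alpha\in(0,\tau)$. Observe that $\varphi^+$ is lower semicontinuous on the complete space $X$ and is bounded below by $0$, with $\varphi^+(x)\le\inf_X\varphi^++\varphi(x)$. Applying Ekeland's variational principle to $\varphi^+$ with parameters $\epsilon=\varphi(x)$ and $\lambda=\varphi(x)/\alpha$ produces $\bar x\in X$ such that
$$
\varphi^+(\bar x)\le\varphi^+(x),\qquad d(\bar x,x)\le\varphi(x)/\alpha,
$$
together with the perturbed-minimality condition
$$
\varphi^+(\bar x)\le\varphi^+(z)+\alpha\,d(\bar x,z),\quad\forall z\in X.
$$

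The heart of the argument consists in showing that $\varphi(\bar x)\le 0$. Arguing by contradiction, suppose $\varphi(\bar x)>0$. By lower semicontinuity of $\varphi$ there exists a neighbourhood $U$ of $\bar x$ on which $\varphi>0$, so that $\varphi^+\equiv\varphi$ at $\bar x$ and throughout $U$; moreover $\varphi(\bar x)\le\varphi^+(x)=\varphi(x)<+\infty$, so $\bar x\in[0<\varphi<+\infty]$. Restricting the Ekeland descent inequality to $z\in U\setminus\{\bar x\}$ gives $\varphi(\bar x)-\varphi(z)\le\alpha\,d(\bar x,z)$; dividing by $d(\bar x,z)$ and passing to the limsup as $z\to\bar x$ yields $\stsl{\varphi}(\bar x)\le\alpha<\tau$ (the same bound holds in the alternative case where $\bar x$ is a local minimizer of $\varphi$, since then $\stsl{\varphi}(\bar x)=0$ by definition), which contradicts the definition of $\tau$.

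Consequently $\bar x\in[\varphi\le 0]$, so this sublevel set is nonempty and $\dist{x}{[\varphi\le 0]}\le d(\bar x,x)\le\varphi(x)/\alpha$; letting $\alpha\uparrow\tau$ delivers the error bound $(\ref{in:erbovarphi})$. I expect the main obstacle to be exactly the transition from the Ekeland inequality for $\varphi^+$ to the slope estimate for $\varphi$: this step is legitimized precisely by combining the contradiction hypothesis $\varphi(\bar x)>0$ with the lower semicontinuity of $\varphi$, which forces $\varphi^+$ to agree with $\varphi$ on a whole neighbourhood of $\bar x$ and thus identifies the Ekeland slope bound with $\stsl{\varphi}(\bar x)$.
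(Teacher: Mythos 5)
Your proof is correct, but it takes a different route from the paper: the paper does not prove the proposition from first principles at all — it simply invokes assertion $a)$ of Az\'e's Theorem 1.10 from \cite{Aze03} with the parameter choice $\alpha=\gamma=0$, $\beta=+\infty$, and observes that nonemptiness of $[\varphi\le 0]$ falls out of the finiteness of the left-hand side of the error bound. You instead reconstruct a self-contained argument via Ekeland's variational principle applied to $\varphi^+=\max\{\varphi,0\}$, which is essentially the standard machinery underlying the cited result. The delicate points are all handled properly: the Ekeland hypotheses for $\varphi^+$ (properness via $[\varphi<+\infty]\ne\varnothing$, boundedness below by $0$, and $\varphi^+(x)\le\inf_X\varphi^++\varphi(x)$); the identification of $\varphi^+$ with $\varphi$ on a whole neighbourhood of $\bar x$ under the contradiction hypothesis $\varphi(\bar x)>0$ together with lower semicontinuity, which is exactly what lets you convert the Ekeland inequality for $\varphi^+$ into the bound $\stsl{\varphi}(\bar x)\le\alpha$; the verification that $\bar x\in[0<\varphi<+\infty]$ so that the contradiction with the definition of $\tau$ is legitimate; and the separate treatment of the local-minimizer case in the slope definition. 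Letting $\alpha\uparrow\tau$ then gives the sharp constant. What your approach buys is transparency and self-containment (the reader need not unpack the parameters of Az\'e's general theorem); what the paper's approach buys is brevity and access to the more general statement (with nonzero level $\gamma$ and finite horizon $\beta$) should it be needed elsewhere. Either would be acceptable; yours could even serve as an appendix-style proof for readers without access to \cite{Aze03}.
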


\begin{proof}
The thesis follows directly from the assertion $a)$ in
\cite[Theorem 1.10]{Aze03}, with the choice $\alpha=\gamma=0$
and $\beta=+\infty$ for the parameters appearing in its statement.
Notice, in particular, that the nonemptiness of $[\varphi\le 0]$
comes as a consequence of inequality $(\ref{in:erbovarphi})$,
whose left-side term must be a real number for every $x\in
 [\varphi<+\infty]$.
\end{proof}

A first enhanced existence result for strong equilibrium problems
can be established in terms of constructions described in Section
\ref{Sect:2} as follows.

\begin{theorem}    \label{thm:SEexist}
With reference to a problem \SVE, suppose that:
\begin{itemize}

\item[(i)] each function $x\mapsto f(x,z)$ is $C$-u.s.c. on $K$,
for every $z\in K$;

\item[(ii)] there exists $x_0\in K$ such that $f(x_0,K)$
is $C$-bounded;

\item[(iii)] it is $\displaystyle\ssinf>0$.

\end{itemize}
Then, $\Solv$ is nonempty and closed and the
following estimate holds
\begin{equation}    \label{in:erbossinf}
    \dist{x}{\Solv}\le {\nu(x)\over\ssinf},
    \quad\forall x\in K.
\end{equation}
\end{theorem}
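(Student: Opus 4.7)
The plan is to reduce the statement to Proposition \ref{pro:Aze03} applied to the restriction $\nu|_K$, viewed as a function on the complete metric space $(K,\|\cdot\|)$, which is itself complete as a closed subset of the Banach space $\mathbb{X}$. The key observation is that the restricted slope $\stslK{\nu}(x)$ used in hypothesis (iii) is by definition exactly the strong slope of the restricted function $\nu|_K$ at $x$ when $K$ is regarded as the ambient metric space. So hypothesis (iii) is precisely the positivity assumption $\tau=\inf_{x\in[0<\nu|_K<+\infty]}\stsl{\nu|_K}(x)>0$ required by Proposition \ref{pro:Aze03}.

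The remaining hypotheses of Proposition \ref{pro:Aze03} can be verified from (i) and (ii). First, from hypothesis (i) and Lemma \ref{lem:Cusclsc}, each function $x\mapsto\dist{f(x,z)}{C}$ is l.s.c. on $K$, and therefore $\nu|_K$ is l.s.c. as an upper envelope of l.s.c. functions (cf. Remark \ref{rem:nulsc}). Second, from hypothesis (ii) and the first part of Lemma \ref{lem:abvbound}, $x_0\in K\cap\dom\nu$, so $[\nu|_K<+\infty]\neq\varnothing$. Having verified completeness, lower semicontinuity and nonempty effective domain, Proposition \ref{pro:Aze03} yields that $[\nu|_K\le 0]$ is nonempty and
$$
 \dist{x}{[\nu|_K\le 0]}\le\frac{\max\{\nu(x),0\}}{\tau},\qquad\forall x\in K\cap\dom\nu.
$$

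Finally, I would translate this back to the language of $\SVE$. By Remark \ref{rem:funchar}, $[\nu|_K\le 0]=K\cap\nu^{-1}(0)=\Solv$, giving nonemptiness. Closedness follows from the fact that $\nu|_K$ is l.s.c. on the closed set $K$. Since $\Solv\subseteq K$, for any $x\in K$ the distance to $\Solv$ computed intrinsically in $K$ coincides with $\dist{x}{\Solv}$ in $\mathbb{X}$; the cases $\nu(x)=0$ and $\nu(x)=+\infty$ are trivial (the former because then $x\in\Solv$ already, the latter because the right-hand side of $(\ref{in:erbossinf})$ is $+\infty$), and for $\nu(x)\in(0,+\infty)$ the estimate follows from the Azé inequality with $\max\{\nu(x),0\}=\nu(x)$ and $\tau\ge\ssinf$. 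The only point requiring a little care is the identification of the restricted slope $\stslK{\nu}$ with the strong slope of $\nu|_K$ on $K$, but this is immediate from the definitions since both $\limsup$'s are taken over sequences $u\to x$ with $u\in K$; apart from this, the argument is essentially an invocation of Azé's variational principle with no additional work.
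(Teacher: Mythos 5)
Your proposal is correct and follows essentially the same route as the paper: both reduce the theorem to Proposition \ref{pro:Aze03} applied to $\nu$ on the complete metric space $(K,d)$, using Remark \ref{rem:nulsc} for lower semicontinuity, Lemma \ref{lem:abvbound} to get $x_0\in\dom\nu$, and Remark \ref{rem:funchar} to identify the $0$-sublevel set with $\Solv$, with the observation that $\stslK{\nu}$ is the strong slope of the restriction. The only cosmetic difference is that the paper dispatches the degenerate case $K\cap[\nu>0]=\varnothing$ separately at the outset, whereas you absorb it into the general argument.
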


\begin{proof}
If $K\cap [\nu>0]=\varnothing$ it means that $\Solv=K$,
so all the assertions in the thesis become trivially true.
Assume henceforth that $K\cap [\nu>0]\ne\varnothing$.
In the light of Remark \ref{rem:nulsc}, under the made assumptions
function $\nu:K\longrightarrow [0,+\infty]$ turns out to be
l.s.c. on $K$ and, by virtue of Lemma \ref{lem:abvbound},
it is $x_0\in [\nu<+\infty]\ne\varnothing$.
As $K$ is closed, the metric space $(K,d)$, where $d$
is the metric induced by the norm of $\X$ on $K$, is
complete.
Thus it is possible to invoke Proposition \ref{pro:Aze03},
in such a way to get, in consideration of the functional characterization
of $\Solv$ (remember Remark \ref{rem:funchar}) $\Solv\ne\varnothing$. By taking
into account that $\nu$ takes nonnegative values only, from
inequality $(\ref{in:erbovarphi})$ one can reach the estimate
in the thesis for every $x\in K\cap[\nu<+\infty]$. The extension
of the validity of inequality $(\ref{in:erbossinf})$ in such a
way to include also $x\in K\cap[\nu=+\infty]$ is obvious.
As for the closedness of $\Solv$, it comes as an immediate
consequence of the lower semicontinuity property of $\nu$ on $K$.
This completes the proof.
\end{proof}

As it happens for existence and error bound results related
to several problems, which can be achieved by following
the present variational approach (see, among the others,
\cite{Uder19,Uder21}), Theorem \ref{thm:SEexist} provides a
sufficient condition for solvability, which
generally fails to be also necessary. The example below
aims at illustrating this fact.

\begin{example}
Let $\X=\Y=\R^2$ be equipped with its standard Euclidean
space structure, $C=\R^2_+$, $K=-\R^2_+$ and let $f:\R^2\times\R^2
\longrightarrow\R^2$ be defined by
\begin{equation}    \label{eq:anexpf}
  f(x,z)=\left(\begin{array}{c}
                  -x_1^2+e^{-\|z\|} \\
                  -x_2^2+{1\over\|z\|+1}
                \end{array}\right),\qquad
   x=(x_1,x_2),\, z=(z_1,z_2)\in\R^2.
\end{equation}
It is evident that $\bar x=\nullv$ is a solution to the problem
$\SVE$ defined by the given data. More precisely, it is
$\Solv=\{\nullv\}$. Indeed, if $\hat x\in -\R^2_+\backslash\{\nullv\}$
it must be $\min\{\hat x_1\,\,\hat x_2\}<0$. According to the
expression in $(\ref{eq:anexpf})$, if taking $z_k=(-k,0)\in-\R^2_+$
for every $k\in\N$, one finds
$$
    f(\hat x,z_k)=\left(\begin{array}{c}
                  -\hat x_1^2+e^{-k} \\
                  -\hat x_2^2+{1\over k+1}
                \end{array}\right)
        \stackrel{\small k\to\infty}{\longrightarrow}
        \left(\begin{array}{c}
                  -\hat x_1^2 \\
                  -\hat x_2^2
                \end{array}\right) \not\in\R^2_+,
$$
and hence, as $\R^2_+$ is closed, for some $z_k\in -\R^2_+$,
it must be true that $f(\hat x,z_k)\not\in\R^2_+$,
so $\hat x\not\in\Solv$.
Since $f$ is continuous on $\R^2$, in particular each function
$x\mapsto f(x,z)$ is $C$-u.s.c. on $-\R^2_+$, for every $z\in
-\R^2_+$. From being $e^{-\|z\|}$, $(\|z\|+1)^{-1}\in (0,1]$ for every
$z\in -\R^2_+$, one deduces that
$$
  \Fmap(x)=f(x,-\R^2_+)\subseteq  \left(\begin{array}{c}
                  -x_1^2 \\
                  -x_2^2
                \end{array}\right)+([0,1]\times [0,1]),
                \quad\forall x\in -\R^2_+,
$$
which shows that the set $f(x,-\R^2_+)$ is bounded (and hence, a fortiori,
$\R^2_+$-bounded) for every $x\in -\R^2_+$. Whereas hypotheses
(i) and (ii) of Theorem \ref{thm:SEexist} happen to be satisfied,
hypothesis (iii) does not. Indeed, for the problem under consideration
the merit function $\nu:\R^2\longrightarrow[0,+\infty)$ is
clearly given by the expression
$$
  \nu(x)=\left\|\left(
  \begin{array}{c}
                  -x_1^2 \\
                  -x_2^2
                \end{array}\right)\right\|=\sqrt{x_1^4+x_2^4},
                \quad\forall x\in -\R^2_+.
$$
By taking into account what noticed in Remark \ref{rem:stslstslK},
one obtains in particular
$$
  \stslK{\nu}(x)=\stsl{\nu}(x)=\|\Fderm{\nu}{x}\|,
  \quad\forall x\in\inte (-\R^2_+)=-\inte\R^2_+.
$$
Then, elementary calculations lead to find
$$
  \|\Fderm{\nu}{x}\|=\left\|\left(
  \begin{array}{c}
                  {2x_1^3\over\sqrt{x_1^4+x_2^4}} \\
                  {2x_2^3\over\sqrt{x_1^4+x_2^4}}
                \end{array}\right)\right\|=
                {2\over\sqrt{x_1^4+x_2^4}}\sqrt{x_1^6+x_2^6},
     \quad\forall x\in -\inte\R^2_+.
$$
Therefore, if setting $x_n=-({1/n},{1/n})\in -\inte\R^2_+$,
one obtains
$$
  \stslK{\nu}(x_n)=2\sqrt{{1\over n^6}+{1\over n^6}\over {1\over n^4}+{1\over n^4}}=
  {2\over n},\quad\forall n\in\N\backslash\{0\}.
$$
Consequently, it results in
$$
  \ssinf=\inf_{x\in -\R^2_+\backslash\{\nullv\}}
  \stslK{\nu}(x)\le
  \inf_{n\in\N\backslash\{0\}} \stslK{\nu}(x_n)=0.
$$
So, hypothesis (iii) is not fulfilled. In spite of this, it
happens that $\Solv\ne\varnothing$. Nevertheless, it is worth
observing that, while a solution to the problem $\SVE$ actually
exists, an error bound such as inequality $(\ref{in:erbossinf})$, with $\ssinf$
replaced with any positive constant $\tau$, fails to work
for the problem at the issue. This because the inequality
$$
  \dist{x}{\Solv}=
  \dist{x}{\{\nullv\}}=\sqrt{x_1^2+x_2^2}\le
  {\sqrt{x_1^4+x_2^4}\over\tau},\quad\forall x\in -\R^2_+,
$$
can never be true, no matter how the value of $\tau>0$ is chosen.
\end{example}

\vskip.5cm


\subsection{Enhanced existence conditions under metric $C$-increase}

Further enhanced existence results for $\SVE$ can be derived by exploiting
conditions able to guarantee hypothesis (iii) of Theorem \ref{thm:SEexist}
to hold ceteris paribus.
As seen in Section \ref{Sect:2}, the property of metric $C$-increase
offers the possibility to estimate the merit function's slope.

\begin{corollary}[Existence under metric increase]   \label{cor:exmetincr}
Under the assumptions (i) and (ii) of Theorem \ref{thm:SEexist},
suppose that:
\begin{itemize}

\item[(iii)] $f(\cdot,z)$ is metrically $C$-increasing on
$K\backslash\Solv$, uniformly in $z\in K$.

\end{itemize}
Then, $\Solv$ is nonempty and closed, and the
following estimate holds true
\begin{equation}
    \dist{x}{\Solv}\le {\nu(x)\over\incr{f}{K\backslash\Solv}-1},
    \quad\forall x\in K.
\end{equation}
\end{corollary}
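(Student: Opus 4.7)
The plan is to derive this corollary as a direct consequence of Theorem \ref{thm:SEexist} combined with the slope estimate in Proposition \ref{lem:Cincrstsl}. The whole argument hinges on identifying $K\backslash\Solv$ with $K\cap[\nu>0]$ and then checking that assumption (iii) here supplies assumption (iii) of the theorem.

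First, I would handle the trivial case $K\subseteq\Solv$ separately, so that from here on $K\backslash\Solv\ne\varnothing$. The next step is the identification: since by Remark \ref{rem:funchar} one has $\Solv=\nu^{-1}(0)\cap K$, and $\nu$ takes values in $[0,+\infty]$, a point $x\in K$ belongs to $K\backslash\Solv$ iff $\nu(x)>0$; hence
\[
   K\backslash\Solv=K\cap[\nu>0].
\]
With this identification, assumption (iii) of the present corollary says precisely that $f(\cdot,z)$ is metrically $C$-increasing on $K\cap[\nu>0]$, uniformly in $z\in K$, which is assumption (ii) of Proposition \ref{lem:Cincrstsl}. Moreover, assumption (i) of Theorem \ref{thm:SEexist} combined with Remark \ref{rem:nulsc} yields the lower semicontinuity of $\nu$ on $K$, supplying assumption (i) of Proposition \ref{lem:Cincrstsl}.

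Now I would invoke Proposition \ref{lem:Cincrstsl} to obtain
\[
  \ssinf\ge\incr{f}{K\backslash\Solv}-1.
\]
Since by Definition \ref{def:metincrunif} the exact bound $\incr{f}{K\backslash\Solv}$ is the supremum of constants $\alpha>1$ for which the uniform $C$-increase condition holds, under assumption (iii) one has $\incr{f}{K\backslash\Solv}>1$, so the right-hand side is strictly positive. This is exactly hypothesis (iii) of Theorem \ref{thm:SEexist}.

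Finally, I would apply Theorem \ref{thm:SEexist}: its remaining hypotheses (i) and (ii) are assumed in the statement of the corollary, so $\Solv$ is nonempty and closed, and the error bound
\[
    \dist{x}{\Solv}\le\frac{\nu(x)}{\ssinf},\quad\forall x\in K,
\]
holds. Substituting the lower bound $\ssinf\ge\incr{f}{K\backslash\Solv}-1>0$ into the denominator yields the estimate in the thesis. There is no real obstacle here, as the corollary is essentially a repackaging; the only point to be a bit careful about is the reduction $K\backslash\Solv=K\cap[\nu>0]$ and the observation that metric $C$-increase automatically furnishes a strictly positive denominator.
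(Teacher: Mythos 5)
Your proposal is correct and follows essentially the same route as the paper: invoke Proposition \ref{lem:Cincrstsl} (after identifying $K\backslash\Solv$ with $K\cap[\nu>0]$ via Remark \ref{rem:funchar}) to get $\ssinf\ge\incr{f}{K\backslash\Solv}-1>0$, which supplies hypothesis (iii) of Theorem \ref{thm:SEexist}, and then read off the conclusion. The extra care you take over the identification of the two sets and the strict positivity of the exact bound is implicit in the paper's shorter argument but entirely consistent with it.
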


\begin{proof}
It suffices to observe that the current hypothesis (iii) enables one
to apply Proposition \ref{lem:Cincrstsl}, according to which
one has $\ssinf\ge\incr{f}{K\backslash\Solv}-1>0$. Therefore also
hypothesis (iii) of Theorem \ref{thm:SEexist} is satisfied.
So all the assertions in the thesis follow at once from Theorem
\ref{thm:SEexist}.
\end{proof}

The next step in the present investigation is
to derive from Corollary \ref{cor:exmetincr} verifiable
conditions for the enhanced existence of solutions to $\SVE$,
which rely on differential calculus, with the aim of making the
last result more suitable for applications. This can be done
to a level of generality large enough to include also certain
nonsmooth mappings.

Following \cite{Robi91}, let us say that a mapping $g:\X\longrightarrow\Y$
between Banach spaces is Bouligand-differentiable (for short, B-differentiable)
at $x_0\in\X$ if there exists a mapping $\Bderm{g}{x_0}\in\Ph(\X,\Y)$
(henceforth called the $B$-derivative of $g$ at $x_0$) such that
\begin{equation}   \label{def:Bdiff}
   \lim_{x\to x_0}{g(x)-g(x_0)-\Bder{g}{x_0}{x-x_0}\over \|x-x_0\|}
   =\nullv.
\end{equation}
The reader should notice that, since $\Bderm{g}{x_0}$ is
required to be continuous by the above notion, then $g$ is
continuous at $x_0$ whenever it is B-differentiable at the same point.
Besides, whenever it happens, in particular, that $\Bderm{g}{x_0}
\in\Lin(\X,\Y)\subseteq\Ph(\X,\Y)$, then $g$ turns out to be Fr\'echet differentiable
at $x_0$, with $\Bder{g}{x_0}{v}=\Fder{g}{x_0}{v}$ for every $v\in\X$.
Therefore $B$-differentiability extends Fr\'echet differential
calculus to a broader class of mappings.
A sufficient condition for the metric $C$-increase of $B$-differentiable
mappings can be formulated in terms of existence of directions,
along which the $B$-derivative is ``firmly positive".

\begin{proposition}[Differential condition for metric increase]   \label{pro:Bdifmincr}
Given a mapping $g:\X\longrightarrow\Y$, a convex subset $S\subseteq\X$
and a closed convex cone $C\subseteq\Y$. If $g$ is B-differentiable at each
point of $S\backslash g^{-1}(C)$ and there exists $\sigma>0$
such that for every $x_0\in S\backslash g^{-1}(C)$
\begin{equation}    \label{hyp:Bdifmincrg}
 \exists u_0\in\Usfer\cap\cone(S-x_0):\ \Bder{g}{x_0}{u_0}+
 \sigma\Uball\subseteq C,
\end{equation}
then $g$ is metrically $C$-increasing on $S\backslash g^{-1}(C)$,
with
\begin{equation}     \label{in:mincrdiffest}
   \incr{g}{S\backslash g^{-1}(C)}\ge\sigma+1.
\end{equation}
\end{proposition}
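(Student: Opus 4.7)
The plan is as follows. In the single-mapping specialization of Definition \ref{def:metincrunif} (the $z$-parameter being absent), to establish metric $C$-increase of $g$ on $S':=S\backslash g^{-1}(C)$ one must exhibit a constant $\alpha>1$ such that for every $x_0\in S'$ there is $\delta_0=\delta_0(x_0)>0$ with the following property: for each $r\in(0,\delta_0)$ one can find $x_r\in\ball{x_0}{r}\cap S'$ satisfying $\ball{g(x_r)}{\alpha r}\subseteq\ball{g(x_0)+C}{r}$. To obtain the estimate $(\ref{in:mincrdiffest})$, it suffices to verify this condition for an arbitrary $\alpha\in(1,\sigma+1)$ and then let $\alpha\uparrow\sigma+1$.

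Fix such $\alpha$ and set $\eta:=\sigma+1-\alpha\in(0,1)$. Given $x_0\in S'$, choose $u_0$ as in $(\ref{hyp:Bdifmincrg})$. Since $u_0\in\cone(S-x_0)$, there exist $\lambda>0$ and $s\in S$ with $u_0=\lambda(s-x_0)$, so by convexity of $S$ one has $x_0+tu_0\in S$ for every $t\in[0,1/\lambda]$. Next, as $g$ is continuous at $x_0$ (being B-differentiable there), $g(x_0)\not\in C$, and $C$ is closed, one can find $\rho>0$ such that $g(x)\not\in C$ for every $x\in\ball{x_0}{\rho}$. Finally, B-differentiability at $x_0$ furnishes $\delta_1>0$ such that $\|g(x_0+h)-g(x_0)-\Bder{g}{x_0}{h}\|\le\eta\|h\|$ whenever $\|h\|\le\delta_1$. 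Setting $\delta_0:=\min\{1/\lambda,\rho,\delta_1\}$, the candidate $x_r:=x_0+ru_0$ lies in $\ball{x_0}{r}\cap S'$ for every $r\in(0,\delta_0)$.

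To verify the ball inclusion, take $y\in\ball{g(x_r)}{\alpha r}$ and write $y=g(x_r)+w$ with $\|w\|\le\alpha r$. Using positive homogeneity of $\Bderm{g}{x_0}$ and the above error estimate, $y-g(x_0)=r\,\Bder{g}{x_0}{u_0}+w+\epsilon_r$ with $\|\epsilon_r\|\le\eta r$. Define $\tilde w:=w$ if $\|w\|\le\sigma r$ and $\tilde w:=(\sigma r/\|w\|)\,w$ otherwise; in either case $\|\tilde w\|\le\sigma r$, so by hypothesis $(\ref{hyp:Bdifmincrg})$ together with the cone property of $C$, the element $c:=r\,\Bder{g}{x_0}{u_0}+\tilde w$ belongs to $C$. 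Moreover $\|w-\tilde w\|\le\max\{0,\|w\|-\sigma r\}\le(\alpha-\sigma)r=(1-\eta)r$, whence $\|y-g(x_0)-c\|\le(1-\eta)r+\eta r=r$, and therefore $y\in\ball{g(x_0)+C}{r}$.

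The main subtle point is the accounting of the $o(r)$ remainder coming from B-differentiability: taking $\alpha$ strictly less than $\sigma+1$ reserves precisely the slack $\eta r$ that absorbs this remainder in the worst-case regime $\sigma r<\|w\|\le\alpha r$, without which the inclusion would fail at the boundary. Everything else is a routine use of convexity of $S$ and the identity $rC=C$ for $r>0$.
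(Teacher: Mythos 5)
Your proof is correct and follows essentially the same route as the paper's: the same witness $x_r=x_0+ru_0$, the same use of continuity of $g$ and convexity of $S$ to keep $x_r$ in $S\backslash g^{-1}(C)$, and the same allocation of the total slack $(\sigma+1)r$ into $\sigma r$ (absorbed by the cone hypothesis) plus $r$ (the target ball radius), merely carried out pointwise via the truncation $\tilde w$ instead of as a chain of set inclusions. The only blemish is the claim $\eta=\sigma+1-\alpha\in(0,1)$, which fails for $\alpha\in(1,\sigma]$ when $\sigma\ge 1$ (and with it the inequality $\|w-\tilde w\|\le(1-\eta)r$, whose right-hand side would be negative); this costs nothing, since the ball inclusion for a given $\alpha$ trivially implies it for every smaller $\alpha>1$, so it suffices to run your argument for $\alpha\in(\max\{1,\sigma\},\sigma+1)$, which still gives $\incr{g}{S\backslash g^{-1}(C)}\ge\sigma+1$ in the limit $\alpha\uparrow\sigma+1$.
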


\begin{proof}
Fix arbitrary $\epsilon\in (0,\min\{\sigma,\, 1\})$ and $x_0\in S
\backslash g^{-1}(C)$ and set $\alpha=\sigma+1-\epsilon>1$.
According to $(\ref{def:Bdiff})$, there exists
$\delta>0$ such that
$$
  g(x_0+tv)-g(x_0)-t\Bder{g}{x_0}{v}\in\epsilon t\|v\|\Uball
  \subseteq\epsilon t\Uball,\quad\forall v\in\Uball,\
  \forall t\in [0,\delta].
$$
In particular, taking $u_0\in\Usfer\cap\cone(S-x_0)$ as in
the assumption $(\ref{hyp:Bdifmincrg})$, one finds
\begin{equation}    \label{in:Bdiffgx0}
  g(x_0+tu_0)\in g(x_0)+t\Bder{g}{x_0}{u_0}+\epsilon t\Uball,
  \quad\forall t\in [0,\delta].
\end{equation}
Notice that by convexity of $S$ it holds $x_0+tu_0\in S$ for every
$t\in [0,\delta]$.
Since $g$ is continuous on $S\backslash g^{-1}(C)$ (as a consequence
of its B-differentiability on the same set) and $C$ is
closed, without any loss of generality, one can assume that
$\ball{x_0}{\delta}\cap S\subseteq S\backslash g^{-1}(C)$.
So, by setting $\delta_0=\delta$, fixed any $r\in (0,\delta_0)$,
let us  choose $x_r=x_0+ru_0$. In this way, one has $x_r\in
S\backslash g^{-1}(C)$ and, on account of the inclusions
$(\ref{in:Bdiffgx0})$ and in $(\ref{hyp:Bdifmincrg})$, it
results in
\begin{eqnarray*}
  \ball{g(x_r)}{\alpha r} &=& g(x_r)+\alpha r\Uball \subseteq
  g(x_0)+r\Bder{g}{x_0}{u_0}+\epsilon r\Uball+\alpha r\Uball \\
  &=& g(x_0)+r\Bder{g}{x_0}{u_0}+\epsilon r\Uball+(\sigma+1-\epsilon)r\Uball \\
  &\subseteq& g(x_0)+r[\Bder{g}{x_0}{u_0}+\sigma\Uball]+
  (\epsilon+1-\epsilon)r\Uball \\
  &\subseteq&  g(x_0)+rC+r\Uball\subseteq\ball{g(x_0)+C}{r}.
\end{eqnarray*}
As $x_0$ was arbitrarily chosen in $S\backslash g^{-1}(C)$, the above
inclusion amounts to say that $g$ is metrically $C$-increasing on
$S\backslash g^{-1}(C)$, with $\incr{g}{S\backslash g^{-1}(C)}\ge
\sigma+1-\epsilon$. The estimate in $(\ref{in:mincrdiffest})$
then follows for arbitrariness of $\epsilon$.
\end{proof}

When dealing with bifunctions, the above differentiability notion
will be employed in its (partial) uniform variant: given a bifunction
$f:\X\times\X\longrightarrow\Y$ and a subset $K\subseteq\X$, $f$
is said to be (partially) B-differentiable at $x_0\in\X$, uniformly
in $z\in K$, if there exist mappings $\Bderm{f(\cdot,z)}{x_0}\in\Ph(\X,\Y)$
with the property that for every $\epsilon>0$ there exists $\delta>0$
(depending on $x_0$ and $\epsilon$, but not on $z\in K$) such that
\begin{equation}    \label{in:uniBdiff}
   \sup_{z\in K}{\|f(x,z)-f(x_0,z)-\Bder{f(\cdot,z)}{x_0}{x-x_0}\|
   \over \|x-x_0\|}\le\epsilon,\quad\forall x\in\ball{x_0}{\delta}.
\end{equation}

\begin{example}    \label{ex:Bdiffunif}
(i) Whenever a bifunction $f:\X\times\X\longrightarrow\Y$ takes
the following additively separable form
$$
  f(x,z)=g(x)+h(z),\quad (x,z)\in \X\times K,
$$
where $g:\X\longrightarrow\Y$ is B-differentiable at $x_0$
and $h:K\longrightarrow\Y$, then $f$ is B-differentiable at $x_0$
uniformly in $K$ and it holds $\Bder{f(\cdot,z)}{x_0}{v}=
\Bder{g}{x_0}{v}$, for every $z\in K$ and $v\in\X$.

(ii) Whenever a bifunction $f:\X\times\X\longrightarrow\Y$ takes
the following factorable form
$$
  f(x,z)=\lambda(z)g(x),\quad (x,z)\in \X\times K,
$$
where $g:\X\longrightarrow\Y$ is B-differentiable at $x_0$
and $\lambda:K\longrightarrow\R$ is bounded on $K$, then $f$
is B-differentiable at $x_0$ uniformly in $K$ and it holds
$\Bder{f(\cdot,z)}{x_0}{v}=\lambda(z)\Bder{g}{x_0}{v}$,
for every $z\in K$ and $v\in\X$.
\end{example}

The next technical lemma provides a sufficient condition for the uniform
metric $C$-increase property of uniformly $B$-differentiable bifunctions,
along with an estimate of the exact bound of uniform metric $C$-increase.

\begin{lemma}     \label{lem:unimetincrdiffcon}
With reference to problem $\SVE$, let $f:\X\times\X\longrightarrow\Y$
be a bifunction and let $K\subseteq\X$ be a convex set.
Suppose that $f$ is B-differentiable in the first argument
at each point of $K\backslash\Solv$, uniformly in $z\in K$,
with $B$-derivatives $\Bderm{f(\cdot,z)}{x_0}\in\Ph(\X,\Y)$.
If there exists $\sigma>0$ such that for every $x_0\in K\backslash\Solv$
\begin{equation}    \label{hyp:Bdifmincr}
 \exists u_0\in\Usfer\cap\cone(K-x_0):\
 \Bder{f(\cdot;z)}{x_0}{u_0}+\sigma\Uball\subseteq C,
 \quad\forall z\in K,
\end{equation}
then $f$ is metrically $C$-increasing on $K\backslash\Solv$,
uniformly in $z\in K$, and it holds
\begin{equation}    \label{in:metincrfbest}
   \incr{f}{K\backslash\Solv}\ge\sigma+1.
\end{equation}
\end{lemma}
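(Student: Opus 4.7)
My plan is to adapt the argument of Proposition \ref{pro:Bdifmincr} to the uniform setting, taking advantage of the fact that both the $B$-differentiability estimate in $(\ref{in:uniBdiff})$ and the inclusion $(\ref{hyp:Bdifmincr})$ hold uniformly in $z\in K$, while the direction $u_0$ selected in $(\ref{hyp:Bdifmincr})$ is chosen at $x_0$ and therefore does not depend on $z$. This latter point is the structural reason the uniform conclusion $(\ref{in:metincrfbest})$ can be recovered with exactly the same shift along $u_0$ working simultaneously for every $z$.

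The steps I would carry out are as follows. First, fix $\epsilon\in(0,\min\{\sigma,1\})$ and set $\alpha=\sigma+1-\epsilon>1$. Pick an arbitrary $x_0\in K\backslash\Solv$ and select $u_0\in\Usfer\cap\cone(K-x_0)$ as in $(\ref{hyp:Bdifmincr})$; write $u_0=\lambda(k-x_0)$ for some $\lambda>0$ and $k\in K$, so that by convexity of $K$ one has $x_0+tu_0\in K$ whenever $t\in[0,1/\lambda]$. Second, invoke the uniform $B$-differentiability of $f$ at $x_0$ to obtain $\delta>0$ (independent of $z$) such that
\[
  \|f(x_0+tu_0,z)-f(x_0,z)-t\Bder{f(\cdot,z)}{x_0}{u_0}\|\le\epsilon t,\quad\forall t\in[0,\delta],\ \forall z\in K.
\]
Third, shrink $\delta$ if necessary so that $\delta\le 1/\lambda$ and so that $x_0+tu_0\in K\backslash\Solv$ for every $t\in[0,\delta]$; the latter is possible because, since $x_0\notin\Solv$, there is $\hat z\in K$ with $f(x_0,\hat z)\notin C$, and continuity of $f(\cdot,\hat z)$ at $x_0$ (implied by $B$-differentiability) together with closedness of $C$ yield a full neighbourhood of $x_0$ disjoint from $f(\cdot,\hat z)^{-1}(C)$.

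Fourth, set $\delta_0=\delta$, fix any $r\in(0,\delta_0)$, and pick $x_r=x_0+ru_0$. Then, combining the uniform $B$-derivative estimate with the inclusion in $(\ref{hyp:Bdifmincr})$, exactly as in the proof of Proposition \ref{pro:Bdifmincr}, one obtains for every $z\in K$
\begin{eqnarray*}
  \ball{f(x_r,z)}{\alpha r} &\subseteq& f(x_0,z)+r\Bder{f(\cdot,z)}{x_0}{u_0}+\epsilon r\Uball+(\sigma+1-\epsilon)r\Uball \\
  &\subseteq& f(x_0,z)+r[\Bder{f(\cdot,z)}{x_0}{u_0}+\sigma\Uball]+r\Uball \\
  &\subseteq& f(x_0,z)+rC+r\Uball\subseteq\ball{f(x_0,z)+C}{r},
\end{eqnarray*}
which is precisely the inclusion required by Definition \ref{def:metincrunif}, with $x_r\in\ball{x_0}{r}\cap(K\backslash\Solv)$ and $\delta_0$ not depending on $z$. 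This proves the uniform metric $C$-increase on $K\backslash\Solv$ with constant $\alpha=\sigma+1-\epsilon$; letting $\epsilon\downarrow 0$ yields the estimate $(\ref{in:metincrfbest})$.

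I do not anticipate a genuine obstacle: the only delicate point is verifying that the one direction $u_0$ simultaneously works for all $z\in K$, and this is forced by the uniformity already built into both hypotheses. The remaining care is merely bookkeeping on $\delta$, to guarantee that the trial points $x_r$ lie in $K\backslash\Solv$ rather than escaping either $K$ (handled via convexity) or into $\Solv$ (handled via continuity and a fixed witness $\hat z$).
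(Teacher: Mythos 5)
Your proposal is correct and follows essentially the same route as the paper's proof: it adapts the argument of Proposition \ref{pro:Bdifmincr}, exploits the fact that both the uniform $B$-differentiability estimate and the inclusion $(\ref{hyp:Bdifmincr})$ hold with a single $z$-independent direction $u_0$ and radius $\delta$, and secures that the trial points $x_r=x_0+ru_0$ remain in $K\backslash\Solv$ via a fixed witness $\hat z$ with $f(x_0,\hat z)\notin C$ (the paper's $z_0$). Your explicit bookkeeping with $u_0=\lambda(k-x_0)$ and $\delta\le 1/\lambda$ to keep $x_0+tu_0$ inside $K$ is a slightly more careful rendering of a step the paper passes over tacitly, but it does not change the argument.
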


\begin{proof}
One needs to adapt the proof of Proposition \ref{pro:Bdifmincr}
to the context of uniform B-differentiability for bifunctions.
Fix arbitrary $\epsilon\in (0,\min\{\sigma,\, 1\})$ and $x_0\in K
\backslash\Solv$ and set $\alpha=\sigma+1-\epsilon>1$.
By uniform $B$-differentiability of $f$ at $x_0$, for some $\delta>0$,
taking $u_0\in\Usfer\cap\cone(K-x_0)$ as in $(\ref{hyp:Bdifmincr})$
(not depending on $z\in K$), one has
$$
  f(x_0+tu_0,z)\in f(x_0,z)+t\Bder{f(\cdot;z)}{x_0}{u_0}+
  \epsilon t\Uball,\quad\forall t\in [0,\delta],
  \ \forall z\in K.
$$
Observe that, since
$$
  x_0\in K\backslash\Solv=K\backslash\bigcap_{z\in K}f(\cdot,z)^{-1}(C)
  =\bigcup_{z\in K}[K\backslash f(\cdot,z)^{-1}(C)],
$$
there exists $z_0\in K$ such that $x_0\in K\backslash
f(\cdot,z_0)^{-1}(C)$. As the mapping $x\mapsto f(x,z_0)$ is continuous
at $x_0$, up to a reduction of the value of $\delta$ one finds
$$
  \ball{x_0}{\delta}\cap K\subseteq K\backslash
  f(\cdot,z_0)^{-1}(C)\subseteq K\backslash\Solv.
$$
So, by setting $\delta_0=\delta$ and choosing $x_r=x_0+ru_0$
for any $r\in (0,\delta_0)$, through the same reasoning as in the
proof of Proposition \ref{pro:Bdifmincr}, one obtains
$$
  \ball{f(x_r,z}{\alpha r}\subseteq\ball{f(x_0,z)+C}{r},
  \quad\forall z\in K.
$$
This shows that the conditions in Definition \ref{def:metincrunif}
are satisfied, along with the related estimate, thereby completing
the proof.
\end{proof}

\begin{corollary}    \label{cor:Bdifcondenex}
Under the assumptions (i) and (ii) of Theorem \ref{thm:SEexist},
suppose that:

\begin{itemize}

\item[(iii)] $f$ is B-differentiable in the first argument on
$K\backslash\Solv$, uniformly in $z\in K$, and $K$ is convex;

\item[(iv)] there exists $\sigma>0$ satisfying the condition
in $(\ref{hyp:Bdifmincr})$.

\end{itemize}
Then, $\Solv$ is nonempty and closed, and the
following estimate holds true
\begin{equation}   \label{in:erboBdifcondenex}
    \dist{x}{\Solv}\le {\nu(x)\over\sigma},
    \quad\forall x\in K.
\end{equation}
\end{corollary}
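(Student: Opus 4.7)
The plan is to obtain this corollary as an essentially immediate consequence of the two preceding results, chaining Lemma \ref{lem:unimetincrdiffcon} with Corollary \ref{cor:exmetincr}. First, I would observe that hypotheses (iii) and (iv) of the current statement coincide exactly with the assumptions of Lemma \ref{lem:unimetincrdiffcon} (the convexity of $K$ and the uniform B-differentiability of $f$ in the first argument on $K\backslash\Solv$, together with the existence of the ``firmly positive'' direction $u_0\in\Usfer\cap\cone(K-x_0)$ as in $(\ref{hyp:Bdifmincr})$). Applying that lemma yields both the qualitative conclusion that $f$ is metrically $C$-increasing on $K\backslash\Solv$ uniformly in $z\in K$, and the quantitative bound $\incr{f}{K\backslash\Solv}\ge\sigma+1$.

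Next, I would invoke Corollary \ref{cor:exmetincr}. Its hypotheses (i) and (ii) are verbatim those of the present statement, and its hypothesis (iii) (uniform metric $C$-increase on $K\backslash\Solv$) has just been established in the previous step. This application immediately delivers the nonemptiness and closedness of $\Solv$, together with the error bound $\dist{x}{\Solv}\le \nu(x)/(\incr{f}{K\backslash\Solv}-1)$ valid for every $x\in K$.

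To convert this into the form $(\ref{in:erboBdifcondenex})$ displayed in the thesis, I would finally combine the two estimates: since $\incr{f}{K\backslash\Solv}-1\ge\sigma>0$ and $\nu(x)\ge 0$, one has $\nu(x)/(\incr{f}{K\backslash\Solv}-1)\le \nu(x)/\sigma$ for every $x\in K$, which is exactly $(\ref{in:erboBdifcondenex})$. I do not anticipate any real obstacle here, as the genuine technical work (the Bouligand-derivative manipulations, the use of the ``firmly positive'' direction to inflate the ball $\ball{f(x_r,z)}{\alpha r}$ inside $\ball{f(x_0,z)+C}{r}$, and the quantification of $\stslK{\nu}$ via metric $C$-increase) has already been absorbed into Proposition \ref{pro:Bdifmincr}, Lemma \ref{lem:unimetincrdiffcon}, and Proposition \ref{lem:Cincrstsl}. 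The only delicate point worth double-checking is that the uniformity in $z\in K$ is threaded consistently through the chain, so that Corollary \ref{cor:exmetincr} is fed with the proper uniform variant of metric $C$-increase rather than merely a pointwise one.
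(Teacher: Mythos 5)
Your proposal is correct and follows essentially the same route as the paper: apply Lemma \ref{lem:unimetincrdiffcon} to get uniform metric $C$-increase with $\incr{f}{K\backslash\Solv}\ge\sigma+1$, then invoke Corollary \ref{cor:exmetincr}. Your explicit final step converting the denominator $\incr{f}{K\backslash\Solv}-1\ge\sigma$ into the stated bound $\nu(x)/\sigma$ is a detail the paper leaves implicit, and your remark on threading the uniformity in $z\in K$ through the chain is exactly the right point to check.
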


\begin{proof}
It suffices to observe that, under the above assumptions, Lemma
\ref{lem:unimetincrdiffcon} can be applied, so $f$ turns out to
be metrically $C$-increasing on $K\backslash\Solv$ uniformly
in $z\in K$, with the estimate in $(\ref{in:metincrfbest})$ being valid.
In such a circumstance, all its hypotheses being satisfied,
it remains to invoke Corollary \ref{cor:exmetincr} to get
all the assertions in the thesis.
\end{proof}

\begin{example}
Let $\X=\Y=\R^2$ be equipped with its standard Euclidean space structure,
let $f:\R^2\times\R^2\longrightarrow\R^2$ be defined by
$$
  f(x,z)=g(z)-g(x),
$$
where $g(x)=x$, let $\R^2$ be partially ordered by the cone $C=\R^2_+$
and let the constraining set be given by
$$
  K_\theta=\left\{x=(r\cos t,r\sin t)\in\R^2:\ r\ge 0,\ \theta\le t\le
  {\pi\over 2}-\theta\right\},
$$
for any fixed $\theta\in (0,{\pi\over 4})$.
It is clear that the strong equilibrium problem defined by the above dataù
is equivalent to finding the strong efficient solutions to the vector
optimization problem
$$
  \min{}_{\parord} g(x) \qquad\hbox{subject to}\qquad x\in K_\theta.
$$
So one readily sees that $\Solv=\{\nullv\}$.
Since $f$ is continuous over $\R^2\times\R^2$ and $f(\nullv,K_\theta)
\backslash\R^2_+=\varnothing$, the assumptions (i) and (ii) of 
Theorem \ref{thm:SEexist} are obviously satisfied. Since $f$
takes a form such as in Example \ref{ex:Bdiffunif}(i), with $g$
being differentiable, assumption (iii) of Corollary \ref{cor:Bdifcondenex}
is satisfied with
$$
  \Bder{f(\cdot,z)}{x}{v}=-v, \quad\forall x\in K_\theta
  \backslash\{\nullv\}.
$$
Let us check that also assumption (iv) of Corollary \ref{cor:Bdifcondenex}
is satisfied. To this aim, observe that, if taking $x_0\in\inte K_\theta$,
then it is $\cone(K_\theta-x_0)=\R^2$. Therefore, by choosing $u_0=
(-1/\sqrt{2},-1/\sqrt{2})\in\cone(K_\theta-x_0)\cap\Usfer$, one finds
\begin{equation}     \label{in:verassiv1}
  \Bder{f(\cdot,z)}{x_0}{u_0}+{1\over\sqrt{2}}\Uball=
  \left(\begin{array}{c}
     {1\over\sqrt{2}} \\
     {1\over\sqrt{2}}
  \end{array}\right)+{1\over\sqrt{2}}\Uball\subseteq\R^2_+.
\end{equation}
Now take $x_0=(r_0\cos\theta,r_0\sin\theta)\in K_\theta\backslash
\{\nullv\}$, for any $r_0>0$. In this case, one has
$$
   \cone(K_\theta-x_0)=\{x=(r\cos t,r\sin t)\in\R^2:\ 
   r\ge 0,\ \theta\le t\le\theta +\pi\}.
$$
Thus, if taking $u_0=(\cos(\theta +\pi),\sin(\theta +\pi))=-
(\cos\theta,\sin\theta)$, one finds
\begin{equation}    \label{in:verassiv2}
  \Bder{f(\cdot,z)}{x_0}{u_0}+\sin\theta\,\Uball=
  \left(\begin{array}{c}
     \cos\theta \\
     \sin\theta
  \end{array}\right)+\sin\theta\,\Uball\subseteq\R^2_+.
\end{equation}
Notice that the last inclusion is true because it is $\sin\theta<\cos\theta$,
as $\theta\in (0,{\pi\over 4})$.
The remaining case $x_0=\left(r_0\cos\left({\pi\over 2}-\theta\right),
r_0\sin\left({\pi\over 2}-\theta\right)\right)\in K_\theta\backslash
\{\nullv\}$, for any $r_0>0$, can be discussed in a similar manner by
taking into account the symmetry of $K_\theta$ with respect to the axe
of equation $x_2=x_1$. Thus, since it is $\sin\theta<{1\over\sqrt{2}}$,
by inclusions $(\ref{in:verassiv1})$ and $(\ref{in:verassiv2})$ one
verifies that condition $(\ref{hyp:Bdifmincr})$ is fulfilled with
$\sigma=\sin\theta$. As a consequence, Corollary \ref{hyp:Bdifmincr}
can be applied to the problem under examination.

In order to check the validity of the error bound provided in its thesis,
one needs to find the expression of $\nu$. This is easily done,
inasmuch as one has
\begin{eqnarray*}
    \nu(x) &=& \sup_{z\in K_\theta}\dist{z-x}{\R^2_+}=
    \sup_{z\in K_\theta}\dist{z}{x+\R^2_+} \\
    &\ge & \dist{\nullv}{x+\R^2_+}=\|x\|,
    \quad\forall x\in K_\theta.
\end{eqnarray*}
On the other hand, as $K_\theta\subseteq\R^2_+$, one gets
\begin{eqnarray*}
    \nu(x) &\le& \sup_{z\in\R^2_+}\dist{z}{x+\R^2_+}=
    \dist{\nullv}{x+\R^2_+}=\|x\|,  \quad\forall x\in K_\theta.
\end{eqnarray*}
From the last inequalities, one obtains
$$
  \nu(x)=\|x\|,  \quad\forall x\in K_\theta.
$$ 
So, in the light of the above computations, it is possible
to check that it actually holds
$$
  \dist{x}{\Solv}=\dist{x}{\{\nullv\}}=\|x\|\le
  {\|x\|\over \sin\theta}={\nu(x)\over\sigma},\quad
  \forall x\in K_\theta,
$$
in accordance with inequality $(\ref{in:erboBdifcondenex})$.

The reader should be warned that the same conclusions can not be
drawn in the (critical) case with $\theta=0$, corresponding
to a vector equilibrium problem defined by the same bifunction $f$,
but with $K=\R^2_+$. There is no $\sigma>0$ for which assumption
(iv) of Corollary \ref{cor:Bdifcondenex} happens to be satisfied.
Indeed, if taking $x_0=(r_0,0)\in K\backslash\{\nullv\}$,
for some $r_0>0$, one can not find $u_0\in\cone(\R^2_+-(r_0,0))=
\{x=(x_1,x_2)\in\R^2_+:\ x_2\ge 0\}$ such that
$$
  -u_0+\sigma\Uball\subseteq\R^2_+.
$$
In spite of this, it is still $\nu(x)=\|x\|$ for every $x\in\R^2_+$
and $\Solv=\{\nullv\}$, so the error bound
$$
  \dist{x}{\Solv}=\|x\|\le\nu(x),\quad\forall x\in\R^2_+
$$
still holds true. This fact shows once more that the conditions
for the enhanced existence established by the present approach
can be only sufficient.
\end{example}

\vskip.5cm


\subsection{Subdifferential conditions for the enhanced solution existence}

Throughout the current subsection the space $(\X\,\|\cdot\|)$
is assumed to be reflexive.

In order to formulate the next enhanced existence results for
problem $\SVE$, some more technical tools from nonsmooth analysis
need to be recalled.
Let $K\subseteq\X$ be a nonempty closed subset and let
$x\in\X\backslash K$ such that $\Proj{x}{K}\ne\varnothing$. In
\cite[Proposition 1.102]{Mord06a} it has been proved that
$$    \label{in:Fsubddistout}
  \Fsub\dist{\cdot}{K}(x)\subseteq\bigcap_{w\in\Proj{x}{K}}
  \FNcone{w}{K}\cap\Usfer^*,
$$
where
$$
  \Fsub\varphi(x)=\left\{x^*\in\X^*:\ \liminf_{v\to\nullv}
  {\varphi(x+v)-\varphi(x)-\langle x^*,v\rangle\over \|v\|}
  \ge 0\right\}
$$
denotes the (regular) Fr\'echet subdifferential
of a function $\varphi$ at $x\in\dom\varphi$, and
$$
  \FNcone{w}{K}=\left\{x^*\in\X^*:\ \limsup_{x\stackrel{K}{\to} w}
  {\langle x^*,x-w\rangle\over \|x-w\|}\le 0\right\}
$$
denotes the cone of the Fr\'echet normals (a.k.a. prenormal cone) to $K$ at $w$.
On the other hand, if $x\in K$ then \cite[Corollary 1.96]{Mord06a}
provides the different representation
$$
  \Fsub\dist{\cdot}{K}(x)=\FNcone{x}{K}\cap\Uball^*.
$$
Since in what follows both the cases have to be considered, it is
convenient to deal with the set-valued mapping $\widehat{B}^*_K:
\X\rightrightarrows\X^*$, defined by
\begin{eqnarray}    \label{def:distFsubd}
   \widehat{B}^*_K(x)=\left\{\begin{array}{ll}
    \FNcone{x}{K}\cap\Uball^* & \hbox{ if } x\in K, \\
    \\
    \displaystyle\bigcap_{w\in\Proj{x}{K}}\FNcone{w}{K}\cap\Usfer^* & \hbox{ if }
    x\not\in K.
   \end{array}\right.
\end{eqnarray}
Whenever $K$ is a closed convex set, in a reflexive Banach space setting
it is $\Proj{x}{K}\ne\varnothing$ for every $x\in\X$. Thus,
since in such an event it turns out that
$\Fsub\dist{\cdot}{K}(x)=\partial\dist{\cdot}{K}(x)$ and
$\FNcone{w}{K}=\Ncone{w}{K}$ (see \cite[Theorem 1.93]{Mord06a}
and \cite[Proposition 1.3]{Mord06a}, respectively), the set-valued mapping
$\widehat{B}^*_K$ is well-defined and takes the special form
\begin{eqnarray*}
   \widehat{B}^*_K(x)=\left\{\begin{array}{ll}
    \Ncone{x}{K}\cap\Uball^* & \hbox{ if } x\in K, \\
    \\
    \displaystyle\bigcap_{w\in\Proj{x}{K}}\Ncone{w}{K}\cap\Usfer^* & \hbox{ if }
    x\not\in K.
   \end{array}\right.
\end{eqnarray*}

\begin{remark}
It is well known that
if $x\in\X\backslash K$ and  $(\X,\|\cdot\|)$ has a uniformly
G\^ateaux differentiable norm, then function $\dist{\cdot}{K}$ is strictly
differentiable at $x$, with $\Fderm{\dist{\cdot}{K}}{x}\in\Usfer^*$
(see \cite[Corollary 4.2]{WuYe03}), so in this case one has
$\widehat{B}^*_K(x)=\{\Fderm{\dist{\cdot}{K}}{x}\}$. The class
of Banach spaces admitting a uniformly G\^ateaux differentiable norm
is known to include all separable spaces (see \cite[Chapter II, Corollary 6.9(i)]{DeGoZi93}).
\end{remark}

\begin{theorem}[$C$-concave case]    \label{thm:Cconcenexsubd}
With reference to a problem $\SVE$, suppose that:
\begin{itemize}

\item[(i)] each function $x\mapsto f(x,z)$ is $C$-concave on
the convex set $K$, for every $z\in K$;

\item[(ii)] there exists $x_0\in K$ such that $f(x_0,K)$
is $C$-bounded;

\item[(iii)] each function $x\mapsto f(x,z)$ is $C$-u.s.c.
on $\X$, for every $z\in K$;

\item[(iv)] there exists $\gamma>0$ such that
\begin{equation}    \label{cap:subdfarcond}
  \left[\partial\nu(x)+\widehat{B}^*_K(x)\right]\cap
  \gamma\Uball^*=\varnothing,\quad\forall x\in\X
  \backslash\Solv.
\end{equation}
\end{itemize}
Then, $\Solv$ is nonempty, closed and convex, and the
following estimate holds true
\begin{equation}    \label{in:erbosubsl}
    \dist{x}{\Solv}\le {\nuka(x)\over\gamma},
    \quad\forall x\in\X.
\end{equation}
\end{theorem}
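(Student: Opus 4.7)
The plan is to apply Proposition \ref{pro:Aze03} to the merit function $\nuka$ on the complete Banach space $\X$, exploiting the functional characterization $\Solv = \nuka^{-1}((-\infty,0])$ recorded in Remark \ref{rem:funchar}. The entire task then reduces to establishing the slope lower bound $\stsl{\nuka}(x) \ge \gamma$ on $[0 < \nuka < +\infty]$.

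The first step is to verify that $\nuka$ is proper, convex, and lower semicontinuous on $\X$. Properness follows from hypothesis (ii) via Lemma \ref{lem:abvbound}, which yields $x_0 \in \dom \nu \subseteq \dom \nuka$. Lower semicontinuity of $\nu$ on $\X$ is obtained from (iii) combined with Lemma \ref{lem:Cusclsc}: each $x \mapsto \dist{f(x,z)}{C}$ is l.s.c. on $\X$ and $\nu$ is their upper envelope; the Lipschitz continuity of $d_K$ transfers the property to $\nuka$. Convexity is obtained from hypothesis (i) through Lemma \ref{lem:Cconcconv}, which gives convexity of each $\dC \circ f(\cdot,z)$ on $K$, hence of $\nu$ on $K$; coupling this with convexity of $d_K$ (since $K$ is convex) produces $\nuka$ convex on $K$, extended to a convex function on $\X$ by $+\infty$ outside $K$ where necessary.

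With $\nuka$ proper, convex, and l.s.c., Remark \ref{rem:stslstslK}(iii) delivers the representation $\stsl{\nuka}(x) = \dist{\nullv^*}{\partial \nuka(x)}$. Continuity of $d_K$ everywhere on $\X$ provides the qualification condition for the Moreau--Rockafellar sum rule, so $\partial \nuka(x) = \partial \nu(x) + \partial d_K(x)$, and in the present reflexive setting with closed convex $K$ one has $\partial d_K(x) = \widehat{B}^*_K(x)$, as noted immediately before the statement. Hypothesis (iv) therefore reads exactly as $\partial \nuka(x) \cap \gamma \Uball^* = \varnothing$ for every $x \in \X \setminus \Solv$, which forces $\dist{\nullv^*}{\partial \nuka(x)} \ge \gamma$ and hence $\stsl{\nuka}(x) \ge \gamma$ on $[0 < \nuka < +\infty]$.

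Proposition \ref{pro:Aze03} applied to $\nuka$ with $\tau = \gamma$ then simultaneously yields $\Solv = [\nuka \le 0] \ne \varnothing$ and the estimate $\dist{x}{\Solv} \le \nuka(x)/\gamma$ for every $x \in \dom \nuka$; the bound extends trivially to points where $\nuka(x) = +\infty$. Closedness and convexity of $\Solv$ are immediate, being the zero sublevel set of the convex l.s.c. function $\nuka$. The principal technical obstacle I anticipate is the combined reliance on the subdifferential sum rule and on the global (as opposed to merely on $K$) convexity of $\nu$, which must be handled consistently via the $+\infty$-extension convention and by invoking the continuity of $d_K$ to ensure the qualification condition.
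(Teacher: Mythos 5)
Your argument is the paper's own proof almost verbatim: establish that $\nuka$ is proper, convex and l.s.c.\ (via Remark \ref{rem:nulsc}, Remark \ref{rem:conconvnuka} and Lemma \ref{lem:abvbound}), convert hypothesis (iv) into the slope bound $\stsl{\nuka}(x)\ge\gamma$ through Remark \ref{rem:stslstslK}(iii), the Moreau--Rockafellar sum rule and the inclusion $\partial\dist{\cdot}{K}(x)\subseteq\widehat{B}^*_K(x)$, and then invoke Proposition \ref{pro:Aze03}. The only deviation is your proposed $+\infty$-extension of $\nuka$ outside $K$, which you should drop: the paper instead reads hypotheses (i)--(ii) through Remark \ref{rem:conconvnuka} as giving convexity of $\nuka$ on all of $\X$ with $\dom\nuka=\dom\nu$, whereas extending by $+\infty$ off $K$ would make the estimate $(\ref{in:erbosubsl})$ vacuous there and would replace $\partial\dist{\cdot}{K}(x)$ by the full normal cone $\Ncone{x}{K}$, severing the link to the set $\widehat{B}^*_K(x)$ appearing in condition $(\ref{cap:subdfarcond})$.
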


\begin{proof}
Observe first that, on account of Remark \ref{rem:nulsc}
and hypothesis (iii), the merit function $\nuka$ is l.s.c. on
$\X$. Moreover, as stated in Remark \ref{rem:conconvnuka},
under hypotheses (i) $\nuka$ is convex on $\X$ and $x_0\in\dom\nuka$,
so that $[\nuka<+\infty]\ne\varnothing$.
Now, take $x\in [0<\nuka<+\infty]$.
Since $\nuka$ is proper, convex and l.s.c. on $\X$, then according
to Remark \ref{rem:stslstslK}(iii), the following slope estimate holds
$$
  \stsl{\nuka}(x)=\dist{\nullv^*}{\partial\nuka(x)}.
$$
Since both $\nu$ and $\dist{\cdot}{K}$ are convex functions, while $\nu$ is l.s.c.
and $\dist{\cdot}{K}$ is continuous at $x_0\in\dom\nu\cap\dom\dist{\cdot}{K}$,
by the Moreau-Rockafellar theorem one obtains
$$
  \partial\nuka(x)=\partial\nu(x)+\partial\dist{\cdot}{K}(x)
  \subseteq \partial\nu(x)+\widehat{B}^*_K(x).
$$
Thus, the condition in hypothesis (iv) implies
$$
  \stsl{\nuka}(x)=\dist{\nullv^*}{\partial\nuka(x)}\ge\gamma,
  \quad\forall x\in [0<\nuka<+\infty].
$$
This estimate shows that is possible to apply Proposition \ref{pro:Aze03}
with $X=\X$ and $\varphi=\nuka$, what leads to obtain the
nonemptiness of $\Solv$ and the inequality
$$
  \dist{x}{\Solv}\le {\nuka(x)\over\gamma},
    \quad\forall x\in [\nuka<+\infty].
$$
The last inequality readily allows one to achieve the estimate in
$(\ref{in:erbosubsl})$.
The convexity of $\Solv$ is a direct consequence of the convexity
of $\nuka$, as observed in Remark \ref{rem:funchar}. Thus the
proof is complete.
\end{proof}

\begin{remark}
It should be noticed that inequality $(\ref{in:erbosubsl})$, in particular,
entails
$$
   \dist{x}{\Solv}\le {\nu(x)\over\gamma},
   \quad\forall x\in K.
$$
\end{remark}

As a comment to Theorem \ref{thm:Cconcenexsubd}, it is to
be pointed out that the condition in hypothesis (iv), involving
function $\nu$, is not explicitly formulated in terms of problem data.
In fact, expressing the subdifferential of $\nu$ in terms of
generalized derivatives of $f$ may imply nontrivial
calculations.
Nevertheless, in the special case in which $K$ is compact
and each function $x\mapsto f(x,z)$ is smooth,
with surjective derivatives, this issue can be faced by
means of well-known subdifferential calculus rules and
other technical results in nonsmooth analysis. To see this
in detail, given $x\in K$ let us set
$$
  \overline{K}_x=\{z\in K:\ \dist{f(x,z)}{C}=\nu(x)\}
$$
and let us introduce the set-valued mapping $B^*_C:\X\rightrightarrows\Y^*$,
defined by
\begin{eqnarray*}
   B^*_C(x)=\left\{\begin{array}{ll}
    \ndc{C}\cap\Uball^* & \hbox{ if } \nu(x)=0, \\
    \\
    \ndc{C}\cap\Usfer^* & \hbox{ if } \nu(x)>0.
   \end{array}\right.
\end{eqnarray*}

\begin{proposition}
With reference to a problem $\SVE$, suppose that:
\begin{itemize}

\item[(i)] $K$ is compact and convex and $(\Y,\|\cdot\|)$ is
reflexive;

\item[(ii)] $f:\X\times\X\longrightarrow\Y$ is
continuous on $\X\times\X$;

\item[(iii)] each function $x\mapsto f(x,z)$ is $C$-concave
on $\X$, for every $z\in K$;

\item[(iv)] each function $x\mapsto f(x,z)$ is ${\rm C}^1(\X)$,
with $\Fderm{f(\cdot,z)}{x}\in\Lin(\X,\Y)$ being onto,
for every $z\in K$;

\item[(v)] there exists $\gamma>0$ such that
\begin{eqnarray*}
  \left[\wclco\left(\bigcup_{z\in \overline{K}_x}
  \Fderm{f(\cdot,z)}{x}^*(B^*_C(x))\right)
  +\widehat{B}^*_K(x)\right]\cap\gamma\Uball^*=
  \varnothing,  \\
  \forall x\in\X\backslash\Solv.
\end{eqnarray*}
\end{itemize}
Then, $\Solv$ is nonempty, closed and convex, and inequality
$(\ref{in:erbosubsl})$ holds true.
\end{proposition}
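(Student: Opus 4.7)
The plan is to reduce this to Theorem \ref{thm:Cconcenexsubd} by verifying its four hypotheses. Hypotheses (i)--(iii) of that theorem follow easily from our assumptions: (i) there is our (iii); for (ii) there, take any $x_0 \in K$ and note that by compactness of $K$ and continuity of $f$, the set $f(x_0,K)$ is compact in $\Y$, hence bounded and therefore $C$-bounded; (iii) there follows because continuity of $x \mapsto f(x,z)$ implies $C$-upper semicontinuity. The real work is deriving the separation condition (iv) of Theorem \ref{thm:Cconcenexsubd} from our (v), and this reduces to establishing the inclusion
\begin{equation*}
  \partial\nu(x) \subseteq \wclco\left(\bigcup_{z\in\overline{K}_x} \Fderm{f(\cdot,z)}{x}^*(B^*_C(x))\right), \qquad \forall x \in \X.
\end{equation*}

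To obtain this, I would set $\varphi_z(x) := \dist{f(x,z)}{C}$ and chain together three ingredients. First, Lemma \ref{lem:Cconcconv} applied to $g = f(\cdot,z)$ (which is $C$-concave by (iii)) shows that each $\varphi_z$ is convex; continuity of $f$ together with the $1$-Lipschitz character of $d_C$ makes $z \mapsto \varphi_z(x)$ continuous on the compact set $K$, so $\overline{K}_x$ is a nonempty compact subset of $K$ and the classical convex-analytic max-function formula yields
\begin{equation*}
  \partial\nu(x) = \wclco\bigcup_{z\in\overline{K}_x} \partial\varphi_z(x).
\end{equation*}
Second, since $f(\cdot,z) \in C^1(\X,\Y)$ with $\Fderm{f(\cdot,z)}{x} \in \Lin(\X,\Y)$ surjective, the open mapping theorem gives metric regularity of $f(\cdot,z)$ at $x$ and the convex subdifferential chain rule takes the equality form
\begin{equation*}
  \partial\varphi_z(x) = \Fderm{f(\cdot,z)}{x}^*\,\partial\dist{\cdot}{C}(f(x,z)).
\end{equation*}
Third, I would verify that $\partial d_C(f(x,z)) \subseteq B^*_C(x)$ for every $z \in \overline{K}_x$: since $c + C \subseteq C$ for $c \in C$, the function $d_C$ is $(-C)$-monotone, so any $y^* \in \partial d_C(y)$ lies in $\ndc{C}$; the $1$-Lipschitz property places $y^*$ in $\Uball^*$; and when $\nu(x) > 0$ the active $z \in \overline{K}_x$ yield $f(x,z) \notin C$, in which case testing a subgradient against the direction from $f(x,z)$ to its projection onto $C$ forces $\|y^*\| = 1$, giving $y^* \in \Usfer^*$.

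Once these three ingredients are assembled, the desired inclusion for $\partial\nu(x)$ is obtained, and our hypothesis (v) translates verbatim into hypothesis (iv) of Theorem \ref{thm:Cconcenexsubd}. Invoking that theorem yields nonemptiness, closedness, convexity of $\Solv$ and the error bound $(\ref{in:erbosubsl})$. The main obstacle I expect is ingredient (B): although it is stated in the form used by convex analysis, the underlying map $f(\cdot,z)$ is merely $C^1$, not linear, so one must justify the equality in the chain rule via the surjectivity (equivalently, metric regularity) of $\Fderm{f(\cdot,z)}{x}$ rather than the standard Moreau--Rockafellar recipe. A secondary technical point is ensuring the right weak${}^*$ closure in the sup-function formula (A), which is where compactness of $K$, continuity in $z$, and reflexivity of $\X$ (so that bounded subsets of $\X^*$ are weak${}^*$ relatively compact by Alaoglu) combine to make the statement meaningful and the closure operation well-behaved.
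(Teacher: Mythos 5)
Your proposal is correct and follows essentially the same route as the paper: reduction to Theorem \ref{thm:Cconcenexsubd} via the max-rule for the subdifferential of a supremum of convex functions, a chain rule through the surjective derivative $\Fderm{f(\cdot,z)}{x}$ (the paper cites Mordukhovich's composition formula and only needs the inclusion $\partial\dist{f(\cdot,z)}{C}(x)\subseteq\Fderm{f(\cdot,z)}{x}^*\partial\dC(f(x,z))$, so your stronger equality claim is unnecessary), and the verification that $\partial\dC(f(x,z))\subseteq B^*_C(x)$ by splitting the cases $\nu(x)>0$ and $\nu(x)=0$. The only minor slip is attributing the weak${}^*$ compactness of bounded sets in $\X^*$ to reflexivity --- Alaoglu's theorem needs no such hypothesis --- but this does not affect the argument.
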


\begin{proof}
Let us start with observing that, by virtue of hypothesis (ii),
the bifunction $\dC\circ f$ is continuous on $\X\times\X$.
By the compactness of $K$,
this implies that, for every $x\in K$, the value $\nu(x)=
\sup_{z\in K}\dist{f(x,z)}{C}$ is attained at some $z\in K$,
so that $\overline{K}_x\ne\varnothing$ for every $x\in K$.
Furthermore, the continuity of $\dC\circ f$ entails that
each function $x\mapsto\dist{f(x,z)}{C}$ is continuous on $\X$
and each function $z\mapsto\dist{f(x,z)}{C}$ is, in particular,
u.s.c. on $\X$. Since by virtue of hypothesis (iii) each
function $x\mapsto\dist{f(x,z)}{C}$ is convex on $\X$
(remember Lemma \ref{lem:Cconcconv}), these facts enable one
to apply the well-known max rule for the subdifferential of
a supremum of convex functions (see, for instance,
\cite[Theorem 2.4.18]{Zali02}), which gives
\begin{eqnarray}  \label{in:wclcomaxform}
    \partial\nu(x) &=& \partial\left(\sup_{z\in K}
    \dist{f(\cdot,z)}{C}\right)(x) \nonumber \\
    &=& \wclco\left(\bigcup_{z\in \overline{K}_x}
    \partial\dist{f(\cdot,z)}{C}(x)\right).
\end{eqnarray}
Since each function $x\mapsto(x,z)$ is,
in particular, strictly differentiable and its derivative
is surjective according to hypothesis (iv), then it is possible
to employ the formula in \cite[Proposition 1.112(i)]{Mord06a},
which rules the subdifferential under composition with
smooth mappings. In doing so, by recalling that the Mordukhovich
(a.k.a. basic or limiting) subdifferential
coincides here with the subdifferential in the sense of convex
analysis by the convexity of the involved functions, one finds
$$
  \partial\dist{f(\cdot,z)}{C}(x)\subseteq
  \Fderm{f(\cdot,z)}{x}^*\partial\dC(f(x,z)).
$$
Now, observe that if $x\in\X\backslash\Solv$ and $\nu(x)>0$,
then for every $z\in\overline{K}_x$ it must be $f(x,z)\not\in C$.
Consequently, as also $(\Y,\|\cdot\|)$ is reflexive,
$\Proj{f(x,z)}{C}\ne\varnothing$ and therefore
$$
   \partial\dC(f(x,z))\subseteq \bigcap_{w\in\Proj{f(x,z)}{C}}
   \Ncone{w}{C}\cap\Usfer^*\subseteq\ndc{C}\cap\Usfer^*.
$$
If $x\in\X\backslash\Solv$ but $\nu(x)=0$, then it happens that
$f(x,z)\in C$ also for $z\in\overline{K}_x$, so one can only
say
$$
  \partial\dC(f(x,z))\subseteq\ndc{C}\cap\Uball^*.
$$
Thus, from inclusion $(\ref{in:wclcomaxform})$ one obtains in any case
$$
  \partial\nu(x)\subseteq \wclco\left(\bigcup_{z\in \overline{K}_x}
  \Fderm{f(\cdot,z)}{x}^*(B^*_C(x))\right),
  \quad\forall x\in [\nuka>0].
$$
In the light of the last inclusion, it is clear that hypothesis (v)
implies the validity of condition (iv) in Theorem \ref{thm:Cconcenexsubd}.

It remains to notice that, for any fixed $x\in\X$, by continuity of the
function $z\mapsto\dist{f(x,z)}{C}$ and by compactness of $K$,
also the set $f(x,K)$ is compact and hence $C$-bounded, so
hypothesis (ii) of Theorem \ref{thm:Cconcenexsubd} is fulfilled.
As for hypothesis (iii) of Theorem \ref{thm:Cconcenexsubd}, it comes
true as an easy consequence of the current hypothesis (ii).
Thus, the thesis can be achieved by applying Theorem \ref{thm:Cconcenexsubd}.
\end{proof}

Another worthwhile comment refers to condition $(\ref{cap:subdfarcond})$,
which, as a requirement for subgradients to be sufficiently away from $\nullv^*$,
can be regarded as a regularity condition. The reader should notice that if
$x\in [\nuka>0]\cap K$, then $\nuka(x)=\nu(x)$, so it must $\nu(x)>0$.
If $K$ is such that $\nu(x)>\dist{f(x,z)}{C}$ for at least some
$z\in K$, that is  $\dist{f(x,\cdot)}{C}$ is not constant over $z\in K$,
then $x$ cannot be a minimizer of $\nu$, with the consequence that
$\nullv^*\not\in\partial\nu(x)$. Therefore, if $\partial\nu(x)$ lies
sufficiently faraway from the origin, it may actually happen
that $\left[\partial\nu(x)+\widehat{B}^*_K(x)\right]\cap
\gamma\Uball^*=\varnothing$.
On the other hand, if $x\in [\nuka>0]\backslash K$, then it may happen
that $\nu(x)=0$, so $\nullv^*\in\partial\nu(x)$, but in such an event,
according to $(\ref{def:distFsubd})$, it
is $\nullv^*\not\in\widehat{B}^*_K(x)$. Thus, again the condition
$(\ref{cap:subdfarcond})$ may actually take place.

By replacing the Fenchel subdifferential with more involved
tools of nonsmooth analysis, the above line of investigation can be expanded
in such a way to consider also problems without $C$-concave
bifunctions.
Below, the reader will find an attempt to develop the
analysis by employing Fr\'echet and Mordukhovich subgradients.
Let $\varphi:\X\longrightarrow\R\cup\{\pm\infty\}$ be function,
l.s.c. around a point $x\in\dom\varphi$ and let $(\X,\|\cdot\|)$
be an Asplund space. In this setting, an equivalent way to introduce
the Mordukhovich subdifferential $\Msub\varphi(x)$ of $\varphi$ at
$\bar x$ by using the Fr\'echet subdifferential is to define
$$
  \begin{array}{ccc}
  \Msub\varphi(\bar x)= & \Uplim & \Fsub\varphi(x), \\
  & \hbox{\scriptsize $x\stackrel{\varphi}{\to} \bar x$} &
  \end{array}
$$
where $\Uplim_{x\stackrel{\varphi}{\to} \bar x}$ denotes the
Painlev\'e-Kuratowski upper limit of the set-valued mapping
$\Fsub\varphi:\X\rightrightarrows\X^*$ as $x\to\bar x$ and
$\varphi(x)\to\varphi(\bar x)$, with respect to the norm topology
on $\X$ and the \weakstar topology on $\X^*$ (see, for more
details, \cite{Mord06a} and Theorem 2.34 therein).

\begin{theorem}[Mordukhovich subdifferential condition]    \label{thm:enexistFsub}
With reference to a problem $\SVE$, suppose that:
\begin{itemize}

\item[(i)] the set $K$ is convex;

\item[(ii)] each function $x\mapsto f(x,z)$ is $C$-u.s.c.
on $\X$, for every $z\in K$;

\item[(iii)] there exists $x_0\in K$ such that $f(x_0,K)$
is $C$-bounded;

\item[(iv)] there exists $\gamma>0$ such that
\begin{equation}    \label{cap:Msubdfarcond}
  \left[\Msub\nu(x)+\widehat{B}^*_K(x)\right]\cap
  \gamma\Uball^*=\varnothing,\quad\forall
  x\in\X\backslash\Solv.
\end{equation}
\end{itemize}
Then, $\Solv$ is nonempty and closed, and the
the estimate in $(\ref{in:erbosubsl})$ holds true.
\end{theorem}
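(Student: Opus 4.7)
The plan is to parallel the proof of Theorem \ref{thm:Cconcenexsubd}, with the Moreau-Rockafellar calculus for Fenchel subgradients replaced by the Mordukhovich limiting subdifferential calculus that is available in our setting (every reflexive space is Asplund). First, by Lemma \ref{lem:Cusclsc} and hypothesis (ii), each function $x\mapsto\dist{f(x,z)}{C}$ is l.s.c. on $\X$, so the upper envelope $\nu$ is l.s.c. on $\X$. Lemma \ref{lem:abvbound} with hypothesis (iii) provides $x_0\in\dom\nu$, hence $\nuka=\nu+d_K$ is proper and l.s.c. on $\X$, with $[\nuka<+\infty]\ne\varnothing$.

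The next step is to push hypothesis (iv) onto $\nuka$. Since $d_K$ is $1$-Lipschitz on $\X$ and $\X$ is Asplund, the Mordukhovich sum rule (see \cite{Mord06a}) yields
$$
  \Msub\nuka(x)\subseteq\Msub\nu(x)+\Msub d_K(x).
$$
Because $K$ is convex, $d_K$ is convex l.s.c., so $\Msub d_K(x)=\partial d_K(x)$, and by the discussion immediately preceding Theorem \ref{thm:Cconcenexsubd} this Fenchel subdifferential is contained in $\widehat{B}^*_K(x)$ in the reflexive setting. Consequently $\Msub\nuka(x)\subseteq\Msub\nu(x)+\widehat{B}^*_K(x)$, and hypothesis (iv) forces $\Msub\nuka(x)\cap\gamma\Uball^*=\varnothing$ for every $x\in\X\backslash\Solv$. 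Since $\Fsub\nuka(x)\subseteq\Msub\nuka(x)$, a fortiori $\dist{\nullv^*}{\Fsub\nuka(x)}\ge\gamma$ on $\X\backslash\Solv$, and in particular on $[0<\nuka<+\infty]$.

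To finish, I would invoke an Asplund-space error bound theorem of Ioffe / Fabian-Henrion-Kruger-Outrata type (see \cite{Ioff00,FaHeKrOu10}): a uniform lower bound $\dist{\nullv^*}{\Fsub\varphi(x)}\ge\gamma>0$ on $[0<\varphi<+\infty]$ for a proper l.s.c.\ function on an Asplund space implies $[\varphi\le 0]\ne\varnothing$ and the global estimate $\dist{x}{[\varphi\le 0]}\le\varphi(x)/\gamma$. Applied to $\varphi=\nuka$, this yields at once $\Solv=[\nuka\le 0]\ne\varnothing$ together with inequality $(\ref{in:erbosubsl})$; closedness of $\Solv$ then follows from the lower semicontinuity of $\nuka$, as in Theorem \ref{thm:SEexist}. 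The main obstacle is precisely this final passage: in the absence of convexity of $\nu$ one cannot identify $\stsl{\nuka}(x)$ with $\dist{\nullv^*}{\partial\nuka(x)}$, so Proposition \ref{pro:Aze03} no longer applies verbatim, and one must either appeal to its subdifferential-based Asplund variant or re-run the underlying Ekeland-plus-fuzzy-sum-rule argument on the hypothesis established in the second step.
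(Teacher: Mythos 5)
Your proof is correct and follows essentially the same route as the paper: lower semicontinuity of $\nuka$ from hypothesis (ii), the Mordukhovich sum rule together with convexity of $K$ to obtain $\Msub\nuka(x)\subseteq\Msub\nu(x)+\widehat{B}^*_K(x)$, and then hypothesis (iv) to push $\Fsub\nuka(x)$ away from $\nullv^*$. The ``main obstacle'' you flag at the end is resolved in the paper exactly along the lines you anticipate: a reflexive space is Asplund, hence $\Fsub$-trustworthy, so Ioffe's slope estimate $\stsl{\nuka}(x)\ge\dist{\nullv^*}{\Fsub\nuka(x)}$ converts your subdifferential bound into $\inf_{x\in[\nuka>0]}\stsl{\nuka}(x)\ge\gamma$, after which Proposition \ref{pro:Aze03} applies verbatim.
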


\begin{proof}
By hypothesis (ii) the merit function $\nuka:\X\longrightarrow
[0,+\infty]$ is l.s.c. on $\X$ and therefore the set
$[\nuka>0]$ is open. By hypothesis (iii) it is $x_0\in
[\nuka<+\infty]\ne\varnothing$. Moreover, observe that
$(\X,\|\cdot\|)$, as a reflexive Banach space, is an Asplund
space. According to \cite[Theorem 1, Chapter 2]{Ioff00},
this is equivalent to the fact that $(\X,\|\cdot\|)$ is
$\Fsub$-trustworthy. Such a property allows one to exploit
the slope estimate in \cite[Proposition 1, Chapter 3]{Ioff00}
valid for l.s.c. functions on open subsets of $\Fsub$-trustworthy
Banach spaces, according to which
\begin{equation}    \label{in:stslFsubdest}
   \inf_{x\in [\nuka>0]}\stsl{\nuka}(x)\ge
   \inf_{x\in [\nuka>0]}\dist{\nullv^*}{\Fsub\nuka(x)}.
\end{equation}
By recalling that for any $x\in\X$ the following general
relation between subdifferentials holds
$$
  \Fsub\nuka(x)\subseteq\Msub\nuka(x),
$$
from inequality $(\ref{in:stslFsubdest})$ one obtains
\begin{equation}
   \inf_{x\in [\nuka>0]}\stsl{\nuka}(x)\ge
   \inf_{x\in [\nuka>0]}\dist{\nullv^*}{\Msub\nuka(x)}.
\end{equation}
Now, as $\nu$ and $\dist{\cdot}{K}$ are l.s.c. and Lipschitz
continuous on $\X$, respectively, they form a semi-Lipschitzian
sum at any $x\in\dom\nuka$ in the sense of \cite[Chapter 2.4]{Mord06a}.
Thus, as $(\X,\|\cdot\|)$ is an Asplund space, according
to the sum rule for the Mordukhovich subdifferential
(see \cite[Theorem 2.33(c)]{Mord06a}) one finds
$$
  \Msub\nuka(x)\subseteq\Msub\nu(x)+\Msub\dist{\cdot}{K}(x)
  \subseteq\Msub\nu(x)+\widehat{B}^*_K(x),
$$
where the last inclusion holds because the Mordukhovich
subdifferential coincides with the Fenchel subdifferential,
in consideration of the convexity of $K$.
By virtue of hypothesis (iv), the last inclusion implies
$$
  \inf_{x\in [\nuka>0]}\stsl{\nuka}(x)\ge\gamma.
$$
Such an inequality enables one to apply Proposition \ref{pro:Aze03},
from which all the assertions in the thesis can be deduced.
This completes the proof.
\end{proof}

\begin{remark}
The author is aware of the fact that Theorem \ref{thm:enexistFsub}
can be subsumed in a more general scheme of analysis, where the Fr\'echet
and the Mordukhovich subdifferentials are replaced with any subdifferential
$\partial_\Box$, axiomatically defined as in \cite[Section 1.5, Chapter 2]{Ioff00},
and $(\X,\|\cdot\|)$ is supposed to be $\partial_\Box$-trustworthy, as meant
in \cite[Definition 4, Chapter 2]{Ioff00}).
It is clear that, following such an approach, the subdifferential condition
$(\ref{cap:Msubdfarcond})$ should be expected to take a more involved form,
because of the need of employing fuzzy sum rules for the $\partial_\Box$
subdifferential.
\end{remark}



\section{Conclusions}    \label{Sect:4}

The contents of the present paper describe an attempt to conduct a study
of solvability and error bounds for vector equilibrium problems
via a variational approach. This attempt leads to obtain, as a main
result, conditions which are expressed in terms of $B$-derivatives,
convex normals and subgradients as well as Mordukhovich subdifferential.
With respect to results focussing exclusively on the existence issue
and obtained by different approaches, the conditions here established
clearly demonstrate the crucial role that nonsmooth analysis may play
in the development of this area. The achievements here discussed
leave open the possibility for subsequent refinements and improvements.
Some of them should come from a convenient expressions of subdifferentials
of $\nu$ in terms of proper generalized derivatives of the bifunction $f$.



\vskip1cm


\begin{thebibliography}{99}


\bibitem{AnKoYa01} Ansari Q.H., Konnov  I. V., and  Yao J. C.:
{\it  Existence of a solution and variational principles for vector
equilibrium problems}, J. Optim. Theory Appl. \textbf{110} (2001),
no. 3, 481--492.

\bibitem{AnOeSc96} Ansari Q.H., Oettli W., and Schl\"ager D:
{\it A generalization of vectorial equilibria}, In: Operations
Research and Its Applications, World Publishing Corporation,
181--185, 1996.

\bibitem{AnOeSc97} Ansari Q.H., Oettli W., and Schl\"ager D:
{\it A generalization of vectorial equilibria}, Math. Methods Oper.
Res. \textbf{46} (1997), no. 2, 147--152.

\bibitem{Ansa00} Ansari Q.H.:
{\it Vector equilibrium problems and vector variational inequalities},
in Vector variational inequalities and vector equilibria, 1--15,
Nonconvex Optim. Appl., \textbf{38}, Kluwer Acad. Publ., Dordrecht, 2000.

\bibitem{Aze03} Az\'e D.:
{\it A survey on error bounds for lower semicontinuous functions},
ESAIM Proc., \textbf{13} (2003), 1--17.

\bibitem{AzeCor04} Az\'e D. and Corvellec J.-N.:
{\it Characterizations of error bounds for lower semicontinuous functions
on metric spaces}, ESAIM Control Optim. Calc. Var. \textbf{10} (2004),
no. 3, 409--425.

\bibitem{BiHaSc97} Bianchi M., Hadjisavvas N., and Schaible S.:
{\it Vector equilibrium problems with generalized monotone bifunctions},
J. Optim. Theory Appl. \textbf{92} (1997), no. 3, 527--542.

\bibitem{BiCaPaPa13} Bigi G., Castellani M., Pappalardo M., and
Passacantando M.:
{\it Existence and solution methods for equilibria},
European J. Oper. Res. \textbf{227} (2013), no. 1, 1--11. 

\bibitem{BluOet94} Blum E. and Oettli W.,
{\it  From optimization and variational inequalities to equilibrium
problems}, Math. Student \textbf{63} (1994), no. 1-4, 123--145.

\bibitem{BorZhu05} Borwein J.M. and Zhu Q. J.: {\it Techniques of variational
analysis}, Springer-Verlag, New York, 2005.

\bibitem{CeMaYa08} Ceng L.C., Matroeni G., and Yao J.C.:
{\it Existence of solutions and variational principles for generalized vector
systems}, J. Optim. Theory Appl. \textbf{137} (2008), no. 3, 485--495.

\bibitem{DeGoZi93} Deville R., Godefroy G., and Zizler V.:
{\it  Smoothness and renormings in Banach spaces},
Longman Scientific \& Technical, Harlow; copublished with John Wiley \& Sons,
Inc., New York, 1993.

\bibitem{DeMaTo80} De Giorgi E., Marino A., and Tosques M.: {\it Problems
of evolution in metric spaces and maximal decreasing curve}, Atti Accad.
Naz. Lincei Rend. Cl. Sci. Fis. Mat. Natur. (8) \textbf{68} (1980), 180--187.

\bibitem{FaHeKrOu10} Fabian M.J., Henrion R., Kruger A.Y., and Outrata J.V.:
{\it Error bounds: necessary and sufficient conditions}, Set-Valued Var.
Anal. \textbf{18} (2010), no. 2, 121--149.

\bibitem{FreKas99} Frenk J.B. and Kassay G.:
{\it On classes of generalized convex functions, Gordan-Farkas type
theorems, and Lagrangian duality}. J. Optim. Theory Appl. \textbf{102}
(1999), no. 2, 315--343.

\bibitem{Gong06} Gong X.H.:
{\it Strong vector equilibrium problems}, J. Global Optim.
\textbf{36} (2006), no. 3, 339--349.

\bibitem{Ioff00} Ioffe A.D.:
{\it Metric regularity and subdifferential calculus}, Russian Math.
Surveys \textbf{55} (2000), no. 3, 501--558.

\bibitem{KanAki82} Kantorovich L.V. and Akilov G.P.:
{\it Functional analysis}, Second edition, Pergamon Press,
Oxford-Elmsford, N.Y., 1982.

\bibitem{Luc89} Luc D.T.:
{\it Theory of vector optimization}, Springer-Verlag, Berlin, 1989.

\bibitem{Mord06a} Mordukhovich B.S.:
{\it Variational analysis and generalized differentiation. I. Basic Theory}.
Springer-Verlag, Berlin, 2006.

\bibitem{Mord06b} Mordukhovich B.S.:
{\it Variational analysis and generalized differentiation. II. Applications}.
Springer-Verlag, Berlin, 2006.

\bibitem{Peno13} Penot J.-P.:
{\it Calculus without derivatives}, Springer, New York, 2013.

\bibitem{Robi91} Robinson S.M.:
{\it An implicit-function theorem for a class of nonsmooth functions}.
Math. Oper. Res. \textbf{16} (1991), no. 2, 292--309.

\bibitem{Rubi77} Rubinov A.M.:
{\it Sublinear operators and their applications}, Uspehi Mat. Nauk
\textbf{32} (1977), no. 4(196), 113--174. [Russian]

\bibitem{Uder19} Uderzo A.:
{\it On some generalized equations with metrically $C$-increasing mappings:
solvability and error bounds with applications to optimization},
Optimization \textbf{68} (2019), no. 1, 227--253.

\bibitem{Uder20} Uderzo A.:
{\it Solution analysis for a class of set-inclusive generalized equations:
a convex analysis approach}, Pure Appl. Funct. Anal. \textbf{5} (2020),
no. 3, 769-–790.

\bibitem{Uder21} Uderzo A.:
{\it On the quantitative solution stability of parameterized set-valued inclusions},
Set-Valued Var. Anal. \textbf{29} (2021), no. 2, 425--451.

\bibitem{Uder22} Uderzo A.:
{\it On tangential approximations of the solution set of set-valued inclusions},
to appear on J. Appl. Anal., 1--23,  https://doi.org/10.1515/jaa-2021-2049.

\bibitem{WuYe03} Wu Z. and Ye J.J.:
{\it Equivalence among various derivatives and subdifferentials of the distance
function}, J. Math. Anal. Appl. \textbf{282} (2003), no. 2, 629--647.

\bibitem{Zali02} Z\u alinescu C.: {\it Convex analysis in general vector spaces},
World Scientific Publishing Co., River Edge, NJ, 2002.

\end{thebibliography}
\end{document}